\def\R{\mathbbm{R}}
\def\H{\mathbbm{H}}
\def\C{\mathbbm{C}}
\def\Z{\mathbbm{Z}}
\def\P{\mathbbm{P}}
\def\L{\mathscr{L}}
\def\Pic{\text{Pic}}
\def\NS{N^1_\R}
\def\Nef{\text{Nef}}
\def\GL{\text{GL}}
\def\Eff{\text{Eff}}
\newtheorem{theorem}{Theorem}[section]
\newtheorem{prop}[theorem]{Proposition}
\newtheorem{defi}[theorem]{Definition}
\newtheorem{remark}[theorem]{Remark}
\newtheorem{conj}[theorem]{Conjecture}
\newtheorem{lemma}[theorem]{Lemma}
\newtheorem{claim}[theorem]{Claim}
\newtheorem*{question}{Question}
\title{On the nef cones of blowups of the projective plane}
\author{Luíze D'Urso}
\date{}
\address{\noindent Luíze D'Urso: IMPA, Estrada Dona Castorina 110, Rio de
  Janeiro, 22460-320, Brazil} 
\email{luize.vianna@impa.br}
\begin{document}

\maketitle
\begin{abstract}
In this paper, we study the Cremona action on the nef cone of $S_n$, the blowup of $\P^2$ in $n$ very general points, with $n\ge9$.
We construct and describe a rational polyhedral fundamental domain of the nef cone for $n=9$ with respect to this action. In the case $n\ge10$, we give a rational polyhedral fundamental domain of the $K_{S_n}$-negative part of the nef cone with respect to the Cremona action. 
\end{abstract}

\tableofcontents

\section{Introduction}

%Let $X$ be a normal projective variety. 
%Then we define its Néron-Severi space as $$N^1(X)_{\R}:=\left(\Pic(X)/ \equiv \right)\otimes{\R}\cong\R^\rho$$ and the nef cone inside it as \[\Nef(X)=\{v\in\NS(X)\mid v\cdot c\ge0 \text{ for each class of curve } c\in N_1(X)\}.\] 
%If we intend to find all possible morphisms $f:X\to Y$ to a projective variety $Y$, we can reduce to the case in which $f$ is a contraction, that is, when $Y$ is normal and $f$ has  connected fibers. In this case, the linear injection $f^*:N^1(Y)\hookrightarrow N^1(X)$ induces a map $\Nef(Y)\hookrightarrow\Nef(X)$, whose image is a face of $\Nef(X)$. In other words, to any contraction $f$, we associate a face of $\Nef(X)$. In some cases (for Mori Dream Spaces, for instance), this is a one-to-one correspondence. This motivates the study of the geometry of the nef cone of $X$. 

The minimal model program proposes the study of the convex geometry of nef cones in order to better understand the birational geometry of projective varieties. The nef cone $\Nef(X)$ of a normal projective variety $X$ is the dual cone of the Mori cone of $X$, which is the closure of the convex cone generated by all classes of curves in $X$. The nef cone is a closed convex cone in the space of Cartier $\R$-divisor classes modulo numerical equivalence, which is isomorphic to $\R^{\rho}$, where $\rho$ is the Picard number of $X$. The nef effective cone of $X$, $\Nef^\text{e}(X)$, is the subcone of $\Nef(X)$ of nef effective $\R$-divisors (see Definition \ref{nef efetivo}).

For Fano varieties, the nef cone coincides with the nef effective cone and is rational polyhedral, that is, it is a convex cone generated by finitely many divisor classes. Consequently, we have a very concrete way to describe the nef cone and its faces. For other varieties, the nef cone may have infinitely many generators as a convex cone, it may be round, and often presents a mix of shapes. 

However, even in some cases where the nef cone is hard to describe, it may admit a rational polyhedral cone as a fundamental domain with respect to an appropriate action. Roughly speaking, a fundamental domain is a region $R$ that contains at least one point of each orbit of the action, while $\text{int}(R)$ contains at most one point of each orbit. In this sense, $R$ may be understood as a geometric realization of representatives of the orbits. When $\Nef(X)$ is not rational polyhedral, but admits a fundamental domain, which is a rational polyhedral cone, the complexity of the geometry of the nef cone comes from the group action.
%Precisely, let $G$ be a group that acts on $\Nef(X)$ by homeomorphisms. Then, a fundamental domain of $\Nef(X)$ with respect to this action is a closed region $R\subset \Nef(X)$ such that $\Nef(X)=\bigcup_{g\in G} gR$ and $\text{int}(R)\cap\text{int}(gR)=\emptyset$ for any $g\in G\backslash \{e\}.$ 

For instance, given a normal projective variety $X$, we may consider the action of the automorphism group of $X$ on divisor classes. Since automorphisms of $X$ preserve nef classes, we can obtain an action of the automorphism group of $X$ on the nef cone. Even if $\Nef(X)$ is a not polyhedral cone, this action may admit a rational polyhedral fundamental cone.

\begin{conj}[Kawamata-Morrison Cone Conjecture]
\label{conjectura}
    Let $X$ be a Calabi-Yau manifold. Then its nef effective cone admits a rational polyhedral fundamental domain with respect to the action of $\text{Aut}(X)$.
\end{conj}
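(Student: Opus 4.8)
The plan is to attack this through the arithmetic framework that has settled the conjecture in every case currently known, rather than in full generality, since for general Calabi-Yau manifolds the statement remains open. The starting point is that $N^1(X)_\R$ carries an integral lattice $\mathrm{NS}(X)$ equipped with a symmetric intersection pairing, and that $\text{Aut}(X)$ acts on this lattice preserving both the pairing and the nef cone $\Nef(X)$. The image of the natural map $\text{Aut}(X)\to \text{O}(\mathrm{NS}(X))$ is therefore a discrete group of integral isometries, and the whole problem is recast as a question about such a group acting on a convex cone inside the positive cone of $\R^{\rho}$.

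First I would reduce the statement to a purely cone-theoretic criterion of Looijenga type: if $\Gamma$ is an arithmetic subgroup of the orthogonal group of a lattice of signature $(1,\rho-1)$ and $C\subseteq\overline{\mathcal C}$ is a $\Gamma$-invariant convex cone contained in the closure of the positive cone $\mathcal C$, then $C$ admits a rational polyhedral fundamental domain for the $\Gamma$-action, provided $C$ equals the convex hull of its rational rays and meets the boundary $\partial\mathcal C$ only at rational points. With this in hand the argument splits cleanly into two tasks: (i) identify the image $\Gamma$ of $\text{Aut}(X)$ in $\text{O}(\mathrm{NS}(X))$ and show it is of finite index, hence arithmetic; and (ii) verify that $\Nef^{\text{e}}(X)$ satisfies the rationality hypotheses of the criterion. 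Step (ii) is where the \emph{effectivity} restriction in $\Nef^{\text{e}}(X)$ does real work, since it is what allows one to control the faces of the cone lying on the light cone and to rule out irrational rays on $\partial\mathcal C$, which would otherwise obstruct rational polyhedrality.

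The main obstacle, and where the geometry of $X$ genuinely enters, is step (i): proving that $\text{Aut}(X)$ surjects onto a finite-index subgroup of the integral isometries preserving $\Nef(X)$. This is precisely the role played by a Torelli-type theorem. For K3 surfaces, the Global Torelli theorem of Piatetski-Shapiro--Shafarevich together with surjectivity of the period map produces enough Hodge isometries realized by genuine automorphisms, and Sterk's argument then delivers the fundamental domain; Totaro's treatment of Calabi-Yau pairs extends this to the remaining two-dimensional cases. For Calabi-Yau manifolds of dimension at least three no such Torelli statement is available, which is exactly why the conjecture is open in general. I would therefore present the proof only in the cases (surfaces and their relatives) where the arithmetic input in step (i) can actually be supplied, and flag the higher-dimensional surjectivity onto an arithmetic group as the essential gap. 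This same arithmetic structure, incidentally, is what makes the explicit surface computations later in the paper---where $\text{Aut}(S_n)$ is replaced by the Cremona action on $N^1(S_n)_\R$---tractable by hand.
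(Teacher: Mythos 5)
This statement is Conjecture \ref{conjectura}, and the paper contains no proof of it: it is quoted precisely as an open conjecture, with citations to the cases where it is known (Sterk, Namikawa and Kawamata in dimension two, and Totaro's extension to klt Calabi--Yau pairs of dimension two). Your proposal correctly recognizes this, and the strategy you sketch --- reduce to a Looijenga-type criterion for an arithmetic group acting on a convex cone in a lattice of signature $(1,\rho-1)$, then use a Torelli-type theorem to show that $\text{Aut}(X)$ maps onto a large enough group of integral isometries --- is an accurate account of how the known two-dimensional cases are actually proved in the literature the paper cites. Your identification of the missing higher-dimensional ingredient (no global Torelli theorem, hence no way to realize enough lattice isometries by automorphisms) is exactly why the conjecture is open, so declining to claim a proof and flagging that step as the essential gap is the right assessment. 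There is nothing in the paper to compare your argument against, because the paper never attempts one.

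One correction about how the paper relates to this conjecture, since your last sentence slightly misstates it. The paper's role for Conjecture \ref{conjectura} is essentially negative: it recalls Totaro's point that the klt hypothesis cannot be dropped, and the surface $S$ studied in the paper (the blowup of $\P^2$ at nine very general points, paired with the strict transform of a cubic through them) is precisely a log canonical, non-klt Calabi--Yau pair for which the $\text{Aut}$-version of the statement fails --- generically $\text{Aut}(S)$ is trivial while $\Nef(S)$ is never polyhedral. The rational polyhedral fundamental domains that the paper then constructs are for the Cremona action $W$, a reflection group in $\text{O}(1,n)$ which is \emph{not} the image of any automorphism group; so the arithmetic structure that makes the paper's computations work exists independently of $\text{Aut}(S_n)$, rather than being supplied by it as in your step (i). This is also why the paper poses, as a question rather than a theorem, whether every log canonical Calabi--Yau pair admits such a domain for \emph{some} group action.
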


References for Conjecture \ref{conjectura} are  \cite{Kawamata-1997},\cite{Morrison_1993} and \cite{morrison_1994}. When $X$ has dimension $2$, the conjecture is proved by analyzing separately different cases \cite{Sterk1985} \cite{Namikawa1985} and \cite{Kawamata-1997}. Totaro generalized the conjecture to klt Calabi-Yau pairs $(X,\Delta)$ \cite{Totaro_2008}, and also proved it in the case of surfaces \cite{Totaro-2010}.

\begin{theorem}[Totaro]
    Let $(S,\Delta)$ be a complex klt Calabi-Yau pair of dimension $2$. Then the action of $\text{Aut}(S,\Delta)$ on the nef effective cone of $S$ admits a rational polyhedral fundamental domain.
\end{theorem}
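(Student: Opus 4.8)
The plan is to transport the problem into the hyperbolic geometry of the Néron–Severi lattice, where a general fundamental-domain theorem for arithmetic groups applies, and then to verify the arithmetic hypotheses using the Calabi–Yau condition and surface Torelli theory. By the Hodge index theorem the intersection form on $N^1(S)_\R$ has signature $(1,\rho-1)$, making it a Lorentzian space; write $\mathcal{C}^+$ for the component of $\{x : x^2 > 0\}$ containing the ample classes. The nef cone $\Nef(S)$ lies in $\overline{\mathcal{C}^+}$, the nef effective cone $\Nef^\text{e}(S) = \Nef(S) \cap \Eff(S)$ is a convex subcone, and $\text{Aut}(S,\Delta)$ acts on $N^1(S)$ by lattice isometries preserving all of these, giving a group $\Gamma \subset O(N^1(S))$.

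First I would record the chamber structure on $\mathcal{C}^+$. The reflections in the classes of $(-2)$-curves generate a group $W$ acting on $\mathcal{C}^+$, and a standard argument shows that $\Nef(S)$ is (the closure of) a fundamental chamber for $W$; moreover the subgroup of $O(N^1(S))$ preserving $\mathcal{C}^+$ decomposes, up to finite index, as $W$ extended by the stabilizer of this chamber. Thus producing a fundamental domain for $\Gamma$ on $\Nef^\text{e}(S)$ is equivalent to producing one for the image of $\Gamma$ in the chamber stabilizer.

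The next, and most delicate, step is arithmeticity: I would show that $\Gamma$ has finite index in the group $\Gamma^* = \{g \in O(N^1(S)) : g\,\Nef^\text{e}(S) = \Nef^\text{e}(S)\}$ of all lattice isometries preserving the nef effective cone. Here the hypothesis $K_S + \Delta \equiv 0$ is essential: every element of $\Gamma^*$ fixes $K_S \equiv -\Delta$, and a Torelli-type argument for the relevant class of Calabi–Yau surface pairs (abelian, K3, Enriques, or log versions thereof) shows that such lattice isometries are, modulo the reflection group, induced by honest automorphisms of the pair. Because $\Gamma^*$ is then an arithmetic subgroup of $O(N^1(S))$, I can invoke the general theorem of Looijenga: an arithmetic group acting on a lattice of signature $(1,n)$ and preserving $\mathcal{C}^+$ admits a rational polyhedral fundamental domain on the rational closure of $\mathcal{C}^+$, namely the convex hull of $\mathcal{C}^+$ with the rational isotropic rays of $\partial\mathcal{C}^+$. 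The finiteness of faces here is exactly the finiteness of cusps for a lattice acting on real hyperbolic space.

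Finally I would intersect this fundamental domain with $\Nef^\text{e}(S)$. Since $\Nef^\text{e}(S)$ is $\Gamma^*$-invariant and is cut out inside the rational closure by nefness against the $(-2)$-curves together with the condition of lying on the effective side at the boundary, and since the only boundary rays of $\Nef^\text{e}(S)$ are the rational isotropic ones coming from elliptic or genus-one fibrations, the restriction of the rational polyhedral domain stays rational polyhedral. The hard part will be the arithmeticity/finite-index step: translating abstract lattice isometries preserving $\Nef^\text{e}(S)$ into automorphisms of $(S,\Delta)$, and simultaneously checking that the effective cone contributes no irrational or accumulating boundary rays, so that the intersection with the positive-cone fundamental domain remains genuinely polyhedral.
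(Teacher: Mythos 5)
First, a remark on the comparison itself: this paper does not prove the statement at all --- it is quoted as Totaro's theorem, with the proof in \cite{Totaro-2010} --- so your proposal has to be measured against Totaro's published argument.

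The fatal gap is your arithmeticity step. You invoke the klt hypothesis in essentially one place: the claim that a ``Torelli-type argument for the relevant class of Calabi--Yau surface pairs (abelian, K3, Enriques, or log versions thereof)'' shows that lattice isometries preserving $\Nef^\text{e}(S)$ are, modulo reflections and finite index, induced by automorphisms of $(S,\Delta)$. When $K_S\equiv 0$ (equivalently $\Delta=0$) this is correct and classical (Sterk, Namikawa, Kawamata), but the entire content of Totaro's theorem is the remaining case, where $\Delta\neq 0$ and $S$ is a rational surface. Rational surfaces have $h^{2,0}=0$, the period map sees nothing, and no Torelli theorem --- log or otherwise --- exists: a lattice isometry preserving $\Nef^\text{e}(S)$ need not come from any automorphism. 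Every other step of your argument uses only $K_S+\Delta\equiv 0$, which holds just as well for log canonical pairs, so if your proof were correct it would apply verbatim to the lc pair in this paper's introduction: $S$ the blowup of $\P^2$ at $9$ very general points, $\Delta$ the strict transform of the cubic. There $\text{Aut}(S)$ is trivial while the lattice stabilizer of $\Nef^\text{e}(S)$ contains the infinite Cremona group $W_9$, so the finite-index claim fails and $\Nef^\text{e}(S)$ admits no rational polyhedral fundamental domain for $\text{Aut}(S)$. Hence the klt hypothesis must be used \emph{geometrically}, to actually construct automorphisms, and your proposal contains no mechanism for doing so; the same criticism applies to your final assertion that all boundary rays of $\Nef^\text{e}(S)$ come from genus-one fibrations, which is again false in the lc example (the ray of $-K_S$ is rational, isotropic and effective, but carries no fibration since $-K_S$ is not semiample). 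This is exactly where the real work in \cite{Totaro-2010} lies: Totaro does use the Looijenga-type criterion you correctly identify as the convex-geometric engine, but the group elements are produced through the log MMP and the geometry of the pair --- for instance Mordell--Weil translations of elliptic fibrations, as in \cite{Grassi93} --- and the klt condition enters through statements such as the semiampleness of nef classes on klt Calabi--Yau surface pairs, not through lattice theory.

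A second, independent error: your chamber-structure step asserts that $\Nef(S)$ is the closure of a fundamental chamber for the group $W$ generated by reflections in $(-2)$-curves. That identification is what makes Sterk's K3 argument work, but it is false as soon as $S$ is a non-minimal rational surface: walls of $\Nef(S)$ also come from $(-1)$-curves and from square-zero fibration classes, neither of which is a reflection wall of $W$. Already for $S=\F_1$ there are no $(-2)$-curves at all, so the $W$-chamber is the entire positive cone, while $\Nef(S)$ is a proper subcone of it; the same happens for all del Pezzo and very general blowups. So the reduction to ``the chamber stabilizer'' collapses precisely in the cases the theorem is really about.
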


%The Kawamata-Morrison Cone Conjecture (CITAÇÃO) states that if $X$ is a Calabi-Yau variety, then $\Nef(X)$ admits a rational polyhedral fundamental domain with respect to the action of $\text{Aut}(X)$. It has been proved for surfaces (CITAÇÃO), but not in higher dimensions, except for some special cases. Totaro also generalized this result for klt Calabi-Yau pairs $(S,\Delta)$ of dimension $2$ (CITAÇÃO).
The klt restriction is in fact necessary for the result to be true, as pointed out by Totaro. In fact, there is an easy counterexample for the conjecture when the singularity of the pair is strictly log canonical. Namely, let $S$ be the blowup of $\P^2$ in $9$ very general points and $\Delta$ the strict transform of a cubic through the $9$ points. We say that a set of points is very general if no $3$ of them lie on the same line, and the same is true after a finite sequence of Cremona transformations centered at $3$ of them. Then $(S,\Delta)$ is Calabi-Yau log-canonical, but not klt. 

By \cite{Grassi93}, if the $9$ points are the base points of a pencil of cubics, then $\text{Aut}(S)$ is infinite and $\Nef^e(S)$ admits a rational polyhedral fundamental cone for the action of $\text{Aut}(S)$. However, in general, $\text{Aut}(S)$ is trivial, while $\Nef(S)$ is never polyhedral, therefore there is no polyhedral fundamental cone with respect to the action of $\text{Aut}(X)$. By allowing a different group action, namely, the so called Cremona action (see Definition \ref{cremona}), and using tools from Geometric Group Theory, we will prove the following theorem:

\begin{theorem}
\label{main theo n=9 part 1}
    The nef cone of the blowup of $\P^2$ in $9$ very general points admits a rational polyhedral cone $\mathcal{C}$ as a fundamental domain with respect to the Cremona action. The cone $\mathcal{C}$ has exactly $10$ facets and can be generated as a convex cone by $10$ Cartier divisor classes.
\end{theorem}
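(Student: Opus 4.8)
The plan is to identify the Cremona action with the affine Weyl group of type $\widetilde{E}_8$ and to exploit its parabolic structure, relative to the isotropic class $K_{S_9}$, to cut out the fundamental domain explicitly.

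First I would fix the lattice data. Write $\text{Pic}(S_9)\cong\Z^{1,9}$ with basis $H,E_1,\dots,E_9$ and intersection form $\mathrm{diag}(1,-1,\dots,-1)$, so that $K=K_{S_9}=-3H+\sum_i E_i$ is isotropic, $K^2=0$. The lattice $K^\perp$ carries the affine root system $\widetilde{E}_8$, with simple roots $\alpha_0=H-E_1-E_2-E_3$ and $\alpha_i=E_i-E_{i+1}$ for $1\le i\le 8$; the Cremona action is generated by the reflections $s_{\alpha_0},\dots,s_{\alpha_8}$, hence is the affine Weyl group $W(\widetilde{E}_8)\cong Q\rtimes W(E_8)$, where $Q$ is the $E_8$ root lattice acting by translations and $W(E_8)$ is the finite Weyl group, both fixing the ray $\R_{\ge0}(-K)$. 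Since for very general points every class $C$ with $C^2=C\cdot K=-1$ is an irreducible $(-1)$-curve, and $W(\widetilde{E}_8)$ permutes these classes while preserving the pairing, the group preserves $\Nef(S_9)=\{D:\ D\cdot C\ge0\ \text{for all }(-1)\text{-curves }C\}$; this is precisely what makes the Cremona action an action on the nef cone.

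Next I would pass to the parabolic picture by slicing with the invariant affine hyperplane $V_1=\{D:\ -K\cdot D=1\}$. On $V_1\cong(K^\perp/\R K)\times\R K\cong\R^8\times\R$ the group $W(\widetilde{E}_8)$ acts as the affine Weyl group $Q\rtimes W(E_8)$ on the negative-definite factor $\R^8$, together with a parabolic shear in the transverse $K$-direction. A short computation shows that for $D\in V_1$ and a $(-1)$-curve $C$ one has $(D+tK)\cdot C=D\cdot C-t$, so moving toward $-K$ keeps all nef inequalities satisfied while moving toward $+K$ eventually violates them; thus $\Nef(S_9)\cap V_1$ is unbounded toward the cusp $[-K]$ and capped by nef walls on the opposite side. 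The fundamental alcove of $\widetilde{E}_8$ is an $8$-simplex bounded by the nine walls $\alpha_i^\perp$, and I expect the fundamental domain to be the nef portion of the prism $(\text{alcove})\times\R K$: a region unbounded toward $[-K]$ and capped by a single active nef wall $C_0^\perp$. Taking the cone over this region in $\R^{10}$ gives $\mathcal{C}$, generated by $-K$ (the recession ray toward the cusp) together with the nine classes lying over the alcove vertices on $C_0^\perp$ — ten Cartier classes in all — and bounded by the nine root-walls $\alpha_i^\perp$ plus the one nef-wall $C_0^\perp$, yielding exactly ten facets.

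To finish I would verify the fundamental-domain axioms. For covering, every nef class is moved by $W(E_8)$ into the dominant chamber and then translated by $Q$ into the alcove prism, all moves staying inside $\Nef(S_9)$ because the group permutes the $(-1)$-curve walls. For injectivity on the interior, no reflection hyperplane $\alpha^\perp$ may meet $\mathrm{int}(\mathcal{C})$: this is the standard fact that an open alcove meets no wall of the affine arrangement, transported to the cone. Rationality is automatic, as all bounding hyperplanes and generators are defined over $\Q$.

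The main obstacle will be the parabolic, non-cocompact nature of the action: unlike the hyperbolic case $n\ge10$, the domain runs all the way out to the cusp $[-K]$, so I must prove that intersecting the alcove prism with the infinitely many $(-1)$-curve walls introduces exactly one facet, and that the $Q$-shear in the $K$-direction does not spoil the covering. Concretely, the delicate point is to show that a single $(-1)$-curve $C_0$ is the binding nef constraint over the entire chosen alcove, so that the remaining nine facets are precisely the root-walls $\alpha_i^\perp$; establishing this sharp count of $10$ — rather than an a priori infinite or merely finite collection of nef facets — is the crux of the argument.
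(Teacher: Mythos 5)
Your framework is sound, and it is essentially the paper's own construction viewed from the cusp: the paper realizes $W$ as a hyperbolic reflection group on $\H^9$ with Coxeter fundamental chamber $\Tilde{\mathcal{P}}=\{x_0+x_1+x_2+x_3\ge0,\ x_1\le\dots\le x_9\}$, and your alcove prism over $V_1$ is exactly the cone over this chamber, the $\widetilde{E}_8$ alcove being the link of the ideal vertex $-K$. The covering and interior-disjointness steps you outline do go through (the shear in the $K$-direction is harmless because no nontrivial element of $W$ acts trivially on the affine quotient $K^\perp/\R K$, so $W\to\mathrm{Aff}(\R^8)$ is injective), and your predicted answer --- nine root walls plus the single wall of $C_0=e_9$, with generators $-K$ and nine classes on $C_0^\perp$ --- is exactly the paper's cone $\mathcal{C}$.

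However, there is a genuine gap, and you name it yourself: you never prove that the exceptional class $e_9$ is the only $(-1)$-curve whose wall is active over the alcove, i.e.\ that every other inequality $D\cdot c\ge0$, $c$ a $(-1)$-curve, is implied by the nine root inequalities together with $D\cdot e_9\ge0$. Without this, you do get a fundamental domain (the intersection of the prism with the nef cone is one, by the same restriction argument as Lemma \ref{lema}), but you cannot conclude that it is polyhedral, let alone that it has exactly $10$ facets --- which is the actual content of the theorem. The paper fills precisely this hole by a combinatorial induction on the degree $d=c\cdot e_0$ of a $(-1)$-class $c=(d,-m_1,\dots,-m_9)$: using $3d-\sum_l m_l=1$ and $m_l\le d$, the inequality $dx_0+\sum_l m_lx_l\ge0$ is exhibited as a sum of $d-1$ inequalities of the form $x_0+x_{i_1}+x_{i_2}+x_{i_3}\ge0$ and one of the form $x_0+x_{j_1}+x_{j_2}\ge0$, each of which follows from the chamber inequalities plus $x_9\le0$ (Theorem \ref{o dominio fundamental} and the Claim after it). A second, smaller omission: to obtain the ten generators, and to handle nef classes on $\partial\L$, one must actually compute the extremal rays of the resulting cone and show that exactly two of them are isotropic, namely the rays of $(1,-1,0,\dots,0)$ and $-K$; this is Proposition \ref{poliedral}, again an explicit estimate rather than a formal consequence of the alcove picture. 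Until these two computations are supplied, the sharp count of $10$ facets and $10$ generators remains a conjecture inside your argument.
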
 

\begin{remark}
    The second statement implies that the cone $\mathcal{C}$ couldn't be simpler to describe, as these are the minimum numbers of facets and extremal rays for a full dimensional cone in $\R^{10}$ not to contain a line.
\end{remark}

The Cremona action in the space of divisor classes is induced by Cremona transformations of the projective plane centered in $3$ of the blown up points. This action preserves the nef cone. To prove Theorem \ref{main theo n=9 part 1}, we make a tour through the theory of reflection groups in the $9$-dimensional hyperbolic space $\H^9$, which for now is a hyperboloid sheet in $\R^{10}$. The Cremona action also preserves this hyperboloid sheet, so it makes sense to consider this action on $\H^9$, manifesting itself as a reflection group. 

It happens that each ray of the nef cone $\Nef(S)$ intersects the hyperbolic space $\H^9$ in exactly one point, or else asymptotically. We can restrict ourselves to the study of the nonassimptotical intersection, which we denote by $\beta_9=\Nef(S)\cap\H^9\subset{\H^9}\subset\P(\R^{10})$, finding a fundamental domain ${\mathcal{P}}\subset{\H^9}$ with the aid of the theory of reflection groups. We conclude that ${\mathcal{P}}$ is the convex hull of $10$ points in $\overline{\H^9}$, therefore $\mathcal{C}$ will be the the convex cone generated by the corresponding $10$ rays in $\R^{10}$.

The fundamental domain ${\mathcal{P}}$ of $\beta_9$ is described in Proposition \ref{poliedral} and is actually very special. Once it is the convex hull of $10$ points, we call it a simplex, the simplest polytope in $\H^9$. Even more is true: its angles are all of the form $\frac{\pi}{m}$ for some integers $m$, which means $\mathcal{P}$ can tile $\H^9$ by reflections. That is, besides being a piece of puzzle of $\gamma_9$, we can see it also as a piece of puzzle of $\H^9$ if we keep reflecting it. These features are condensed in the theorem below.

\begin{theorem}
\label{main theo n=9 part 2}
    The polytope $\mathcal{P}=\mathcal{P}_9$ described in Proposition \ref{poliedral} is a hyperbolic Coxeter $9$-simplex.
\end{theorem}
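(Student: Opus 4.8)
The plan is to verify the two defining properties of a hyperbolic Coxeter simplex in turn: that $\mathcal{P}$ is a genuine $9$-dimensional simplex with vertices in $\overline{\H^9}$, and that every dihedral angle between two of its facets is of the form $\pi/m$ for an integer $m$. Proposition \ref{poliedral} already presents $\mathcal{P}$ as the convex hull of $10$ points of $\overline{\H^9}$ and exhibits its $10$ supporting hyperplanes, so the first task is mostly bookkeeping: for each facet I would record a primitive normal vector $v_1,\dots,v_{10}$ in the Néron--Severi space $\R^{10}$ equipped with its intersection form of signature $(1,9)$, and check that these $10$ vectors are linearly independent. Independence puts the $10$ bounding half-spaces in general position, so that every $9$ of them meet in a single point; this is exactly the condition for their intersection to be a simplex with the given $10$ points as vertices. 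I would also record which vertices are proper (timelike representative, $\langle v,v\rangle>0$) and which are ideal (null representative): in particular the class $-K_{S_9}$, being isotropic and orthogonal to every simple root, spans the common edge of the nine root facets and thus appears as an ideal vertex, the cusp of $\mathcal{P}$.

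The heart of the argument is the Gram matrix of the inward normals. After normalizing each $v_i$ to be unit spacelike, $\langle v_i,v_i\rangle=-1$, the interior dihedral angle $\theta_{ij}$ along the ridge $F_i\cap F_j$ is determined by a single inner product, the orientations being chosen so that $\cos\theta_{ij}=\langle v_i,v_j\rangle$. Because the normals are lattice classes, the unnormalized products are integers, and the only normalized values that occur will be $0$, $\tfrac12$ and $\tfrac1{\sqrt2}$, yielding the angles $\tfrac\pi2$, $\tfrac\pi3$ and $\tfrac\pi4$. Concretely, nine of the normals can be taken to be the simple roots $\alpha_0=H-E_1-E_2-E_3$ and $\alpha_i=E_i-E_{i+1}$ for $1\le i\le 8$, whose mutual products reproduce the Coxeter diagram of $\widehat{E}_8$ (single bonds giving $\tfrac\pi3$, orthogonality elsewhere); the tenth facet is the wall $E_9^{\perp}$ cut out by the $(-1)$-curve $E_9$, and a direct check shows $E_9$ is orthogonal to every simple root except $\alpha_8=E_8-E_9$, with $\langle E_9,\alpha_8\rangle$ forcing the angle $\tfrac\pi4$. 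Every dihedral angle is therefore a submultiple of $\pi$, which is precisely the definition of a Coxeter simplex, and $\mathcal{P}$ tiles $\H^9$ under the reflections in its facets.

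With the Gram matrix in hand I would identify the Coxeter diagram explicitly as the affine $\widehat{E}_8$ diagram together with a tenth node joined to the endpoint $\alpha_8$ of its long arm by a double bond. This is the expected shape: deleting that tenth node returns the parabolic subdiagram $\widehat{E}_8$, which stabilizes the cusp $-K_{S_9}$ and acts on the corresponding horosphere as the affine Weyl group of $E_8$, so the link of that ideal vertex is the Euclidean alcove of $\widehat{E}_8$. To confirm finite volume I would run the subdiagram criterion, checking that every proper subdiagram (the link of a vertex) is elliptic or affine; the elliptic ones give the ordinary vertices, while the affine ones — among them $\widehat{E}_8$ — give the cusps. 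This is consistent with dimension $9$ being the top dimension in which finite-volume hyperbolic Coxeter simplices exist.

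The step I expect to be most delicate is not the arithmetic of the Gram matrix but the bookkeeping around it. One must orient each normal so that the computed quantity is the genuine interior dihedral angle and not its supplement — the difference between reading $\tfrac\pi4$ and $\tfrac{3\pi}4$ off the same inner product — which requires fixing the sign conventions for the Lorentzian form and the choice of inward half-spaces once and for all. In parallel one must confirm that the ten facets really do meet in the combinatorial pattern of a simplex and that the claimed ideal vertices are genuine cusps rather than points at infinite distance in the wrong sense. Once the inward normals are correctly oriented and the vertex-link analysis is carried out, the conclusion that $\mathcal{P}$ is a hyperbolic Coxeter $9$-simplex is immediate.
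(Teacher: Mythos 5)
Your proposal is correct, and its core is the same as the paper's: the paper also takes the ten normals $v_0=e_0-e_1-e_2-e_3$, $v_i=e_i-e_{i+1}$ for $1\le i\le 8$, and $v_9=\sqrt{2}\,e_9$, computes the Cartan (Gram) matrix, reads the angles $\frac{\pi}{2},\frac{\pi}{3},\frac{\pi}{4}$ off the entries $0,-1,-\sqrt{2}$, and obtains exactly the diagram you describe (the $\widehat{E}_8$ diagram with a tenth node attached to $\alpha_8$ by a double bond), while simplex-ness and finite volume are imported from Proposition \ref{poliedral}. Where you genuinely diverge is the finite-volume step: the paper gets it from the explicit light-cone containment proved in Proposition \ref{poliedral} (whose two ideal points are $(1:-1:0:\dots:0)$ and $-K_S$), whereas you propose Vinberg's subdiagram criterion — every rank-$9$ subdiagram elliptic or parabolic — which is a valid, classification-style alternative and has the added virtue of explaining structurally why $-K_S$ is a cusp whose link is the affine $\widehat{E}_8$ alcove. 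One caveat on your first step: linear independence of the ten normals is not by itself the condition for being a finite-volume simplex in $\overline{\H^9}$; it only guarantees that each set of nine hyperplanes meets in a single projective point, and such a point could a priori be hyperideal, i.e.\ lie outside the light cone, in which case the polytope would have infinite volume. Excluding that is precisely what Proposition \ref{poliedral} (or, in your scheme, the vertex-link analysis) accomplishes, so that check is doing real mathematical work and should not be presented as bookkeeping.
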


In \cite{GeometryII} part II chapter $5$ section 2.3 Table $4$, we can check the classification of hyperbolic Coxeter $n$-simplices. There are three of them for $n=9$, up to congruence, and there is no Coxeter $n$-simplex for $n>9$, so it is a special object in Geometric Group Theory. Its Coxeter diagram is the following:

\begin{center}

\usetikzlibrary{positioning}

\begin{tikzpicture}
  [scale=.8,auto=left,every node/.style={circle,draw=black,fill=none}]
  
  \node (n1) at (1,10) {};
  \node (n2) at (2,10) {};
  \node (n3) at (3,10) {};
  \node (n4) at (4,10) {};
  \node (n5) at (5,10) {};
  \node (n6) at (6,10) {};
  \node (n7) at (7,10) {};
  \node (n8) at (8,10) {};
  \node (n9) at (9,10) {};
  \node (n10) at (3,11) {};  % Novo vértice

  \foreach \from/\to in {n1/n2,n2/n3,n3/n4,n4/n5,n5/n6,n6/n7,n7/n8,n3/n10}
    \draw (\from) -- (\to);
    
  % Aresta dupla entre os vértices 8 e 9
  \draw ([yshift=-2pt]n8.east) -- ([yshift=-2pt]n9.west);
  \draw ([yshift=2pt]n8.east) -- ([yshift=2pt]n9.west);

\end{tikzpicture}.
\end{center}

In connection to the cone conjecture, Theorem \ref{main theo n=9 part 1} leads to the following question.

\begin{question}
    Does the nef effective cone of any Calabi-Yau log-canonical pair $(S,\Delta)$ admit a rational polyhedral fundamental domain with respect to some group action?
\end{question}

Generalizing this example, we may consider the blowup $S$ of $\P^2$ in $n$ very general points, and the corresponding Cremona action. In this case, there is no Coxeter $n$-simplex in $\H^n$, but we prove the following:

\begin{theorem}
\label{main theo n>9}
    Let $S$ be the blow up of $\P^2$ in $n>9$ very general points. Then the cone $$\Nef^\text{e}(S)_{K_S\le0}:=\{\alpha\in\Nef^\text{e}(S)\mid\alpha\cdot K_S\le0\}$$ admits a rational polyhedral fundamental cone with respect to the Cremona action.
    %For $n\ge10$, the part of the nef cone of the blowup of $\P^2$ in $n$ very general points that intersects $K_S$ non-positively admits a rational polyhedral fundamental domain with respect to the Cremona Action.
\end{theorem}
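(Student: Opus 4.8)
The plan is to set up the problem in the Picard lattice $N^1(S)\cong\mathbb{Z}^{1,n}$, where the intersection form has signature $(1,n)$ and the canonical class $K_S=-3H+E_1+\cdots+E_n$ satisfies $K_S^2=9-n<0$. The Cremona action is generated by the permutations of the exceptional classes $E_i$ together with the standard quadratic transformation, and these generate a group $W$ acting on $N^1(S)$ by isometries of the intersection form. The key structural fact is that $W$ is (up to finite index, or exactly) a Coxeter/Weyl group of type $E_n$, generated by reflections in the $(-2)$-classes $\alpha_0=H-E_1-E_2-E_3$ and $\alpha_i=E_i-E_{i+1}$; these are exactly the simple roots whose Dynkin diagram appears in the excerpt. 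For $n>9$ the form $K_S^2<0$ means $K_S$ is a class of \emph{negative} self-intersection, so the hyperplane $\{x\cdot K_S=0\}$ is a spacelike hyperplane meeting the positive light cone. The whole point of restricting to $\mathrm{Nef}^{\mathrm{e}}(S)_{K_S\le0}$ is that this slice is a \emph{finite-volume} (indeed compact after projectivizing) portion of the nef cone on which the reflection group acts cocompactly enough to produce a polyhedral fundamental domain.

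\emph{First} I would recall, as in the $n=9$ argument, that every ray of $\mathrm{Nef}(S)$ either meets the hyperbolic space $\mathbb{H}^n=\{x : x^2=1,\ x\cdot H>0\}$ in a unique point or does so asymptotically, so that studying $\mathrm{Nef}(S)$ reduces to studying $\beta=\mathrm{Nef}(S)\cap\mathbb{H}^n$ as a convex subset of hyperbolic space on which $W$ acts. \emph{Next} I would identify the fundamental chamber: $\beta$ is cut out by the inequalities $x\cdot C\ge0$ as $C$ ranges over the curve classes (the effective $(-1)$-curves and the positive combinations of them), and the reflection group $W$ acts on this chamber structure. The crucial reduction is that imposing $\alpha\cdot K_S\le0$ truncates $\beta$ by a single additional $W$-invariant half-space (since $K_S$ is fixed by the whole Cremona action, as the quadratic transformation and permutations preserve $-K_S$). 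Because $K_S^2<0$, this half-space $\{x\cdot K_S\le 0\}\cap\mathbb{H}^n$ is a \emph{horoball-free}, finite-region slice: the locus $\{x\cdot K_S=0, x^2=1\}$ is a totally geodesic copy of $\mathbb{H}^{n-1}$, and the side we keep is compact modulo the cusps, which is exactly the regime where Coxeter-group fundamental-domain theory applies.

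\emph{Then} the central step is to invoke the theory of reflection groups acting on hyperbolic space with a convex invariant region: the group $W$ acts on $\beta_{K_S\le0}:=\beta\cap\{x\cdot K_S\le0\}$ as a discrete reflection group, and by the standard structure theory (Vinberg's theory of hyperbolic reflection groups, together with the fact that the Weyl group $W(E_n)$ for $n>9$ is of hyperbolic type and acts properly discontinuously on $\mathbb{H}^n$) the quotient of the nef region by $W$ is obtained by intersecting a Weyl chamber with the truncating hyperplane. I would show that the fundamental domain $\mathcal{P}$ for the $W$-action on $\beta_{K_S\le0}$ is the polytope cut out by the finitely many walls $\{x\cdot\alpha_i\ge0\}$ corresponding to the simple roots, intersected with $\{x\cdot K_S\le0\}$. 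The finiteness of the number of walls (hence rational polyhedrality) follows because only finitely many of the reflection hyperplanes meet the compact truncated region $\beta_{K_S\le0}$ — this is where the sign $K_S^2<0$ does the essential work, as opposed to the $n=9$ case where $K_S^2=0$ forces an asymptotic/cusp analysis. Lifting back to the cone, $\mathcal{C}=\mathbb{R}_{\ge0}\cdot\mathcal{P}$ gives the desired rational polyhedral fundamental cone, and its effectivity follows from noting the extremal rays are effective (nonnegative combinations of $(-1)$-curve classes).

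\emph{The main obstacle} I expect is establishing that only finitely many reflection walls meet the truncated nef region, i.e.\ that $\beta_{K_S\le0}$ is genuinely finite-volume/relatively compact in $\mathbb{H}^n$ and that the induced chamber decomposition is locally finite there. Concretely, one must control the root system: although $W(E_n)$ for $n>9$ has infinitely many positive roots (equivalently infinitely many $(-2)$-classes and infinitely many $(-1)$-curves), I must verify that the condition $x\cdot K_S\le0$ together with $x\in\mathbb{H}^n$ excludes all but finitely many walls $\alpha^\perp$ — this amounts to bounding $|\alpha\cdot K_S|$ and $|\alpha\cdot H|$ on the region, a lattice-theoretic estimate using the signature $(1,n)$ form and the fact that a wall meeting the region forces the root $\alpha$ to have bounded intersection with both $H$ and $K_S$. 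A secondary technical point is checking that $\mathcal{P}$ really is a fundamental domain and not merely a union of several, which requires confirming that the walls $\alpha_i^\perp$ together with $K_S^\perp$ bound a single chamber of the $W$-action; I would handle this by the standard Coxeter-theoretic criterion that the Gram matrix of the bounding hyperplanes is of the correct (finite-covolume hyperbolic) type.
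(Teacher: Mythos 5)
Your overall framework (the hyperbolic model, $W$ acting by reflections, a Weyl chamber truncated by the $K_S$-halfspace) matches the paper's, but your identification of the fundamental domain is wrong, and this is a genuine gap rather than a cosmetic one. You claim the fundamental domain for the $W$-action on $\beta_{K_S\le0}$ is the chamber $\{x\cdot\alpha_i\ge0\}$ cut by $\{x\cdot K_S\le0\}$, i.e.\ the $n+1$ walls coming from the simple roots and from $K_S$. That polytope is not contained in the nef region: the point $v=(1,\epsilon,\dots,\epsilon)$ with $0<\epsilon\ll1$ (coordinates in the basis $e_0,e_1,\dots,e_n$) satisfies $x_0+x_1+x_2+x_3\ge0$, $x_1\le\dots\le x_n$ and $3x_0\ge-\sum_i x_i$, and lies in $\H^n$ after rescaling, yet $v\cdot e_n=-\epsilon<0$, so $v$ violates the nef inequality of the exceptional $(-1)$-curve with class $e_n$. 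The reason this wall cannot be dropped is structural: the walls of $\beta$ are hyperplanes orthogonal to $(-1)$-classes, and these are \emph{not} reflection hyperplanes of the Cremona group (reflections in $W$ fix $(-2)$-classes; the reflection in $e_n^{\perp}$ is precisely the extra generator $r_n\notin W$ discussed at the end of Section 5). What your polytope is a fundamental domain for is $\H^n\cap\{x\cdot K_S\le0\}$, not $\beta\cap\{x\cdot K_S\le0\}$. The correct domain is the paper's $\mathcal{P}^-_n$, with the additional wall $x_n\le0$, hence $n+2$ facets.

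Once the missing wall is restored, the real work is exactly what your proposal does not supply. (i) One must show that inside $\{x_0+x_1+x_2+x_3\ge0,\ x_1\le\dots\le x_n\le0\}$ every other $(-1)$-curve inequality is redundant; the paper proves this by induction on the degree $d=c\cdot e_0$ (the Claim following Theorem \ref{o dominio fundamental}). Your Vinberg-style estimate that \textquote{only finitely many walls meet the region} is aimed at the root hyperplanes of $W$, not at the $(-1)$-curve hyperplanes, so it does not substitute for this step. (ii) Finite volume is not automatic from $K_S^2<0$ and is certainly not compactness: $\mathcal{P}^-_n$ has two ideal vertices, and the paper's Proposition \ref{caso grande} establishes by an explicit induction and vertex optimization that the cone cut out by the $n+2$ inequalities lies in the light cone and meets $\partial\L_n$ in exactly the rays of $(1,-1,0,\dots,0)$ and $(3,-1,\dots,-1,0,\dots,0)$. (iii) Since $\H^n$ only sees the interior of $\L_n$, the covering statement on $\partial\L_n$ needs a separate argument; the paper uses a descent on lattice points, applying $\varphi_{123}$ to strictly decrease the positive integer $x_0$ until the class lands in $\mathcal{C}_n$. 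Your remark that the extremal rays are effective addresses $\mathcal{C}_n\subset\Eff(S)$ but not this covering step. Finally, your proposed verification via \textquote{the Gram matrix being of finite-covolume hyperbolic type} would actually fail here: the paper's last proposition shows $\mathcal{P}^-_n$ is a Coxeter polytope only for $n=10,11,13$, yet it is a fundamental domain for every $n\ge10$ — precisely because it need only tile the $W$-invariant region $\beta_{K_S\le0}$ (via the elementary argument of Lemma \ref{lema}), not all of $\H^n$.
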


\begin{remark}
    This fundamental domain will no longer be a cone over a simplex, as it has one extra facet, but we still get a concrete description, with $n+2$ facets.
\end{remark}

%It is possible to question if there is always a polyhedral fundamental domain for the nef cone, if we allow choosing an appropriate action.

This paper is organized as follows. In section $2$ we present the necessary background from Geometric Group Theory to achieve our goals. Concepts concerning fundamental domains, the $n$-dimensional hyperbolic space, convex polytopes, Coxeter diagrams and reflection groups can be found in this section. In section $3$ we recall what is known about nef cones of blowups of the projective plane in $n$ very general points, and present the Cremona action. In section $4$ we relate the two previous sections, mixing Algebraic Geometry with Geometric Group Theory. At the end of the section, we present a complete description for a fundamental domain of the \textquote{conjectured nef cone} of the blowup, which coincides with the nef cone for $n\le9$. In section $5$ we study the case $n=9$, proving that this fundamental domain is in fact a rational polyhedral cone, completing the proofs of Theorems \ref{main theo n=9 part 1} and \ref{main theo n=9 part 2}. In section $6$ we study the case $n\ge10$, showing that the $K_S$-negative part of the nef cone admits a rational polyhedral fundamental cone as stated in Theorem $\ref{main theo n>9}$.

In this text, we always work over $\C$.

\section{Reflection Groups in $\H^n$ and their Fundamental Polytopes}

In this section we give a brief overview of the tools we need from Geometric Group Theory. Most of its content can be found in \cite{Dolgachev-2007}. We define the hyperbolic space $\H^n$, introduce the notion of polytopes and reflection groups in $\H^n$, and present Theorem \ref{teorema1}, which gives conditions for a polytope to be a fundamental domain with respect to a reflection group. We start by recalling the definition of a fundamental domain. By $X^{\circ}$ we denote the interior of a topological space $X$.

\begin{defi}
\label{dom fund}
    Let $G$ be a group that acts faithfully on a topological space $T$ by homeomorphisms. A fundamental domain of $T$ with respect to this action is a closed region $R\subset T$ such that $T=\bigcup_{g\in G} gR$ and $R^{\circ}\cap(gR)^{\circ}=\emptyset$ for any $g\in G\backslash \{e\}.$
\end{defi} 

In this section, the topological space $T$ is $\H^n$ while $G$ is a reflection group (see Definition \ref{reflection}). In this case, the fundamental domain is a convex polytope in $\H^n$ (see Definition \ref{polytope}), and we call it a fundamental polytope. Later on, we will take different spaces to be $T$.

We define the $(n+1)$-dimensional generalized Minkovski space $\R^{1,n}$ to be the vector space $\R^{n+1}$ endowed with a bilinear form of signature $(1,n)$. We fix an orthogonal basis $\{e_0,e_1,\dots,e_n\}$ such that $e_0^2=1$ and $e_i^2=-1$ for every $i\ge1$.

%\begin{remark}Frequently in the literature, the space $\R^{1,n}$ is given by the vector space $\R^{n+1}$ together with a symmetric bilinear form of signature $(n,1)$, instead of $(1,n)$ as above, but with the adequate modifications, they are the same.\end{remark}

\begin{defi}
\label{hyperbolic space}
    The $n$-dimensional hyperbolic space is $$\H^n:=\{v\in\R^{1,n}\mid v^2=1,\ v\cdot e_0>0\}\subset\R^{1,n},$$ with distance defined as $\text{dist}(x,y)=\cosh^{-1}(x\cdot y)$.
\end{defi}

\begin{remark}
The hyperbolic space $\H^n$ is the unique simply connected $n$-dimensional Riemannian manifold of constant curvature $-1$, up to isometry. There are many ways to describe it as a subset of an Euclidean space together with an explicit Riemannian metric. The one in Definition \ref{hyperbolic space} is known as the hyperboloid model, or Minkowski model.
\end{remark}

The light cone is defined as $$\L=\L_n:=\{v\in\R^{1,n}\mid v^2\ge0\}\subset\R^{1,n}.$$ Notice that each point of $\H^n$ corresponds to exactly one line in the light cone, namely, the line connecting that point to the origin. In this sense, $\H^n$ may be described as the projetivization of the interior of $\L_n$, that is, $$\H^n\cong\P(\L_n^{\circ})=\frac{\L_n^{\circ}}{\sim},$$ with $\L_n^{\circ}=\{v\in\R^{1,n}\mid v^2>0\}$ and the equivalence relation given by nonzero scalar multiplication. It is also a convention to interpret $\partial\H^n$ as the lines through the origin that are contained in $\partial\L_n=\{v\in\R^{1,n}\mid v^2=0\}$: $$\partial\H^n\cong\P(\partial\L_n)=\frac{\partial\L_n\backslash\{(0,\dots,0)\}}{\sim}.$$ Set $$\overline{\H^n}=\H^n\cup\partial\H^n\cong\P(\L_n)=\frac{\L_n\backslash\{(0,\dots,0)\}}{\sim},$$ the compactification of $\H^n$.

From now on, we use coordinates $(x_0,x_1,\dots,x_n)$ to describe a vector in $\R^{1,n}$, and $(x_0:x_1:\dots:x_n)$ to describe the point in $\overline{\H^n}$ that corresponds to the line through the origin and the point $(x_0,x_1,\dots,x_n)\in\L_n\backslash\{(0,0,\dots,0)\}$.

\begin{defi}
\label{ort}
    Given a vector $u\in\R^{1,n}$ such that $ u^2=-2$, define the hyperplane $u^{\perp}\subset\H^n$ by $$u^{\perp}=\{x\in\H^n\mid x\cdot u=0\}.$$ We call $u$ and $-u$ the normal vectors of the hyperplane $u^{\perp}=(-u)^{\perp}$. An open halfspace in $\H^n$ associated to a normal vector $u$ is the set $$u^{>0}=\{x\in\H^n\mid x\cdot u>0\},$$ and a closed halfspace in $\H^n$ is the closure $$u^{\ge0}=\{x\in\H^n\mid x\cdot u\ge0\}$$ of an open halfspace in $\H^n$.
\end{defi}

%\begin{remark}
    %While $u^{\perp}$ is the notation for the orthogonal hyperplane to $u$ in $\R^{1,n}$, we use $v^{\perp}$ to denote the orthogonal hyperplane to $v$ in $\R^{n+1}$ with respect to the standard inner product.
%\end{remark}

\begin{remark}
    Once $u^2<0$, the condition $u^2=-2$ can be achieved after normalization by a positive scalar. The condition $u^2<0$ is important because it implies that the hyperplane $u^{\perp}\subset\R^{1,n}$ intersects the hyperbolic space $\H^n$, equivalently, the interior $\L_n^{\circ}$ of the light cone. In the context of vectors in $\R^{n+1}$ with the standard inner product $\left<\ ,\ \right>$, a hyperplane $v^\perp=\{x\in\R^{n+1}\mid \left<x,v\right>=0\}$ intersects $\L_n^{\circ}$ if and only if $v\notin\L_n$. In $\R^{1,n}$, the normal vectors $u$ and $-u$ to the hyperplane $v^{\perp}$ are Euclidean reflections of $v$ through the $e_0$ axis, modulo a nonzero scalar multiplication. In other words, $v\notin\L_n\iff u\notin\L_n$.
    %A hyperplane of $\H^n$ is a set of points $(x_0,x_1,\dots,x_n)\in\R^{1,n}$ such that $x_0^2-x_1^2-\dots-x_n^2=1$, $x_0>0$ and $a_0x_0+a_1x_1+\dots+a_nx_n=0$ for some $a_0,a_1,\dots,a_n\in\R$, as long as this conditions admit at least one solution. This means that $(x_0,x_1,\dots,x_n)\in\H^n$ and $(x_0,x_1,\dots,x_n)\cdot(a_0,-a_1,\dots,-a_n)=0$, as long as $(a_0,-a_1,\dots,-a_n)^2<0$. In other words, a hyperplane of $\H^n$ is a set $\{x\in\H^n\mid x\cdot a=0\}$ for some $a\in\R^{1,n}$ such that $a^2<0$. 
\end{remark}

\begin{defi}
    A collection of hyperplanes $\{{\lambda}^{\perp}\}_{\lambda\in\Lambda}\subset\H^n$ is said to be locally finite if each compact subset of $\H^n$ intersects only finitely many of them. A collection of halfspaces $\{{\lambda}^{\ge0}\}_{\lambda\in\Lambda}\subset\H^n$ is said to be locally finite if the corresponding collection of hyperplanes $\{{\lambda}^{\perp}\}_{\lambda\in\Lambda}\subset\H^n$ is locally finite.
\end{defi}

\begin{defi}
\label{polytope}
    A convex polytope in $\H^n$ $$\mathcal{P}=\bigcap_{i\in I} {u_i}^{\ge0}$$ is the intersection of a locally finite collection of halfspaces $\{{u_i}^{\ge0}\}_{i\in I}$. In this case, we say that $\mathcal{P}$ is determined by the collection of normal vectors $\{u_i\}_{i\in I}$.
\end{defi} 

\begin{remark}
\label{consistencia}
    In the context of Definition \ref{polytope}, we always assume that no halfspace ${u_j}^{\ge0}$ contains the intersection $\bigcap_{i\in I\backslash\{j\}} {u_i}^{\ge0}$ of the other ones, so the set of halfspaces we use to describe the polytope is minimal. 
\end{remark}

\begin{defi}
\label{simplex}
    A polytope in $\H^n$ is said to have finite volume if its closure in $\overline{\H^n}$ is the convex hull of finitely many points of $\overline{\H^n}$. It is called a simplex if it has finite volume and is determined by exactly $n+1$ halfspaces, or, equivalently, if its closure in $\overline{\H^n}$ is the convex hull of $n+1$ points of $\overline{\H^n}$.
\end{defi}

\begin{remark}
    It is possible to define the volume of a polytope in $\H^n$ since it is a metric space. This would give an equivalent notion of having finite volume as in Definition \ref{simplex}.
\end{remark}

Once we fix an order for the normal vectors to the facets of a polytope, we can associate to it a Cartan Matrix, which encodes information about the angles of the polytope.

\begin{defi}
\label{cartan}
    The angle between two halfspaces $u^{\ge0}$ and $v^{\ge0}$ with $u^2=v^2=-2$ is the real number $0\le\theta\le\pi$ such that $u\cdot v=2\cos(\theta)$, as long as $-2\le u\cdot v\le 2$. If $|u\cdot v|>2$, we say the hyperplanes $u^{\perp}$ and $v^{\perp}$ are divergent, in which case, as hyperplanes of $\R^{1,n}$, they intersect outside of the light cone. The Cartan Matrix of a polytope determined by the collection of normal vectors $\{v_i\}_{i\in I}$ is a (possibly infinite) matrix with entries $a_{ij}=-v_i\cdot v_j=-2\cos(\theta_{ij})$. 
\end{defi}

\begin{remark}
    By Definition \ref{cartan}, the angle between a halfspace $u^{\ge0}$ with itself is $\pi$.
\end{remark}

For any polytope in $\H^n$, the angles between its defining halfspaces encode the necessary information to determine whether it is a fundamental domain of a reflection group, as we see in Theorem \ref{teorema1} below.

\begin{defi}
    A convex polytope in $\H^n$ is said to be a Coxeter polytope if the angles between any two halfspaces are either $0$, a submultiple of $\pi$, that is, $\frac{\pi}{m}$ for some $m\in\Z_{>0}$, or the corresponding halfspaces are divergent. If the angle is $0$, by abuse of notation, we consider that $m=\infty$. A simplex which is a Coxeter polytope is called a Coxeter simplex.
\end{defi}

\begin{defi}
\label{reflection}
    A reflection of $\H^n$ is an isometry of order $2$, whose set of fixed points is a hyperplane of $\H^n$. A group generated by reflections of $\H^n$ is called a reflection group if its orbits in $\H^n$ are all discrete.
\end{defi}

\begin{theorem}[\cite{Dolgachev-2007}, Theorem 2.1] \label{teorema1}
    For every Coxeter polytope $\mathcal{P}$ in $\H^n$, the group $\Gamma$ generated by the reflections with respect to its facets is a reflection group, and $\mathcal{P}$ is a fundamental domain for $\H^n$ with respect to the action of $\Gamma$. Conversely, any reflection group $\Gamma$ in $\H^n$ admits a Coxeter polytope $\mathcal{P}$ as a fundamental domain.
\end{theorem}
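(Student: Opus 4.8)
The plan is to prove both directions by reducing to the Poincaré polyhedron theorem, whose hypotheses are exactly the Coxeter condition on dihedral angles. I would organize the forward direction around a comparison between an abstract Coxeter group and the geometric group $\Gamma$. First I would introduce the abstract Coxeter group $W$ attached to $\mathcal{P}$: for each facet with normal vector $u_i$ take a generator $s_i$, and impose the relations $s_i^2=1$ together with $(s_is_j)^{m_{ij}}=1$ whenever the facets $u_i^{\perp},u_j^{\perp}$ meet at angle $\pi/m_{ij}$ (omitting the relation, i.e.\ setting $m_{ij}=\infty$, when they are parallel or divergent). Sending $s_i$ to the geometric reflection $r_i$ across $u_i^{\perp}$ defines a candidate homomorphism $\phi:W\to\Gamma$. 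To see it is well defined I would record the elementary hyperbolic-geometry fact that two reflections across hyperplanes meeting at angle $\theta$ compose to a rotation by $2\theta$ about their intersection, so that $(r_ir_j)^{m_{ij}}=\mathrm{id}$ exactly when $\theta_{ij}=\pi/m_{ij}$; when the mirrors are parallel or divergent the product has infinite order and no relation is needed. Thus $\phi$ is a surjection $W\twoheadrightarrow\Gamma$.

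The heart of the argument is the Poincaré polyhedron theorem. Its hypotheses are the \emph{cycle conditions} around each codimension-two face (ridge) of $\mathcal{P}$: the reflected copies of $\mathcal{P}$ must close up around a ridge after filling an angle of exactly $2\pi$. Because the ridge where facets $i$ and $j$ meet carries dihedral angle $\pi/m_{ij}$, precisely $2m_{ij}$ copies fit around it without gaps or overlaps, which is exactly the Coxeter condition. Granting Poincaré's theorem, I obtain simultaneously that the translates $\{w\mathcal{P}:w\in W\}$ tile $\H^n$ (they cover it and have pairwise disjoint interiors), that the tiling is locally finite, that $\phi$ is in fact an isomorphism $W\xrightarrow{\sim}\Gamma$, and that $\mathcal{P}$ is a strict fundamental domain. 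Local finiteness of the tiling forces every $\Gamma$-orbit in $\H^n$ to be discrete, so $\Gamma$ is a reflection group in the sense of Definition \ref{reflection}; and the tiling statement is precisely the pair of conditions $\H^n=\bigcup_{g\in\Gamma}g\mathcal{P}$ and $\mathcal{P}^{\circ}\cap(g\mathcal{P})^{\circ}=\emptyset$ for $g\neq e$ required by Definition \ref{dom fund}.

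For the converse I would run the construction backwards. Given a reflection group $\Gamma$, collect the mirrors $\{H_\alpha\}$ of all reflections contained in $\Gamma$; discreteness of orbits guarantees this family of hyperplanes is locally finite. The closure of any connected component (chamber) of $\H^n\setminus\bigcup_\alpha H_\alpha$ is a convex polytope $\mathcal{P}$, and I must check it is a Coxeter polytope. The key point is that two adjacent walls cannot meet at an angle that is not a submultiple of $\pi$: if two mirrors met at angle $\theta$ with $\theta/\pi$ irrational, the two reflections would generate an infinite rotation subgroup whose orbit of a point on the axis is dense, contradicting discreteness; and a rational but non-submultiple angle would force a mirror of $\Gamma$ to cut through the interior of the chamber, contradicting that $\mathcal{P}$ is a single chamber. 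Hence all dihedral angles are $0$, of the form $\pi/m$, or the walls are divergent, so $\mathcal{P}$ is a Coxeter polytope, and the forward direction certifies it as a fundamental domain.

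I expect the genuine obstacle to be the Poincaré polyhedron theorem itself — specifically, proving that the copies $w\mathcal{P}$ cover all of $\H^n$ and have disjoint interiors, equivalently the injectivity of $\phi$. This is a global statement established by a developing-map and completeness argument: one builds the tiling locally using the cycle conditions, then uses connectedness and completeness of $\H^n$ together with local finiteness to extend it globally and rule out overlaps. The local-to-global step, rather than the angle bookkeeping, is where the real work lies.
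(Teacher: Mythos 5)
The paper does not actually prove this statement: it is quoted from Dolgachev's survey (Theorem 2.1 there) and ultimately rests on Vinberg's theory of discrete reflection groups, so there is no in-paper argument to compare against. Your sketch reconstructs the standard proof from that literature: the forward direction via Poincar\'e's polyhedron theorem (with the cycle condition around a ridge of dihedral angle $\pi/m_{ij}$ satisfied by $2m_{ij}$ reflected copies), and the converse via chambers of the mirror arrangement, with the irrational-angle/dense-rotation and rational-non-submultiple/extra-mirror dichotomy. That is the right architecture, and your remark that the developing-map, local-to-global step inside Poincar\'e's theorem is where the real work lies is accurate. One citation-level caveat: a Coxeter polytope in the sense of Definition \ref{polytope} may have infinitely many facets and infinite volume, whereas the classical Poincar\'e theorem is usually stated for finitely-sided polyhedra with completeness hypotheses; you need the version for locally finite polytopes, which is precisely Vinberg's theorem rather than a black box one can wave at.

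There is, however, one genuine gap in your converse. The forward direction certifies $\mathcal{P}$ as a fundamental domain for the group $\Gamma'$ generated by the reflections in the walls of the chosen chamber $\mathcal{P}$, and a priori $\Gamma'$ is only a subgroup of the given reflection group $\Gamma$; the theorem asserts $\mathcal{P}$ is a fundamental domain for $\Gamma$ itself. You must close this by proving $\Gamma'=\Gamma$. The standard argument: by local finiteness, every mirror $H$ of a reflection $r\in\Gamma$ contains a point lying on no other mirror, so $H$ is a wall of some chamber $C$; since the $\Gamma'$-translates of $\mathcal{P}$ tile $\H^n$ and $\Gamma'$ permutes chambers, every chamber equals $w\mathcal{P}^{\circ}$ for some $w\in\Gamma'$, so choose $w$ with $wC=\mathcal{P}$; then $wrw^{-1}$ is the reflection across the wall $wH$ of $\mathcal{P}$, hence lies in $\Gamma'$, hence $r\in\Gamma'$. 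As $\Gamma$ is generated by its reflections, $\Gamma=\Gamma'$. Without this step, the tiling statement you invoke says nothing about elements of $\Gamma\setminus\Gamma'$, which could in principle map $\mathcal{P}^{\circ}$ onto an overlapping region, so the step is genuinely needed and not merely cosmetic.
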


In other words, Theorem \ref{teorema1} says that Coxeter polytopes in $\H^n$ are the polytopes that can tile $\H^n$ by reflections.

\begin{defi}
    To any Coxeter polytope we associate a graph, called Coxeter diagram, as follows. The vertices are the halfspaces that delimit the polytope. Two distinct vertices are joined with $m-2$ edges if the corresponding halfspaces make an angle of $\theta=\frac{\pi}{m}$, with $m\ge3$. They are joined with a dotted edge if the corresponding hyperplanes are divergent. If $m=2$, there is no joining edge, and if $m=\infty$ they are joined by a dashed edge.
\end{defi}

\begin{remark}
    By Remark \ref{consistencia}, if two different halfspaces that delimit a polytope make an angle of $\pi$, then one of them is irrelevant to describe the polytope, which is assumed not to happen. The angle $\pi$ will only appear as the angle of each halfspace with itself, so it has no impact on the Coxeter diagram.
\end{remark}

\section{The Cremona Action and the Nef Cone}

Let $\P^2_n$ be the blowup of $\P^2$ at $n$ points, say $P_1,P_2,\dots,P_n\in\P^2$, in general posistion, i.e. not three of them in the same line. Recall that, as a vector space, the Néron-Severi Space $$N^1_{\R}(\P^2_n):=\left(\Pic(\P^2_n)/ \equiv \right)\otimes{\R}$$ is isomorphic to $\R^{n+1}$. Moreover, the intersection product allows us to endow it with a symmetric bilinear form of signature $(1,n)$. More precisely, denote by $e_0\in\NS(\P^2_n)$ the class of the pullback of a line in $\P^2$ and $e_i\in\NS(\P^2_n)$ the class of the exceptional divisor over $P_i$ for $i=1,2,\dots,n$. With this choice of generators, we have \[\NS(\P^2_n)\cong\bigoplus_{i=0,1,\dots,n}\R e_i,\]
where $e_0^2=1$, $e_i ^2=-1$ for $i=1,2,\dots,n$ and $e_i \cdot e_j=0$ for $i\neq j$. In other words, $\NS(\P^2_n)\cong\R^{1,n}$, the generalized Minkovski space of dimension $n+1$ and we can use all concepts from the previous section. Let $x_0, x_1, \dots, x_n$ be the coordinates in $\R^{1,n}$.

For each $3$ distinct blown up points $P_i,P_j,P_k$, the standard quadratic transformation of $\P^2$ centered at these points induces a linear transformation $\varphi_{ijk}$ in $\R^{1,n}$, which preserves the bilinear form. This linear map acts on the space spanned by $\{e_0,e_i,e_j,e_k\}$ according to the matrix
\begin{equation*}
\begin{pmatrix}
2 & 1 & 1 & 1  \\
-1 & 0 & -1 & -1 \\
-1 & -1 & 0 & -1 \\
-1 & -1 & -1 & 0 
\end{pmatrix},
\end{equation*} 
and it fixes the remaining vectors of the basis $\{e_0,e_1,\dots,e_n\}$. The set of fixed points of $\varphi_{ijk}$ is the hyperplane $x_0+x_i+x_j+x_k=0$ in $\NS(\P^2_n)$. In particular, the vector $-K_{\P^2_n}=3e_0-\sum_{i=1}^n e_i=(3,-1,-1,\dots,-1)$ is a fixed point for all distinct $i,j,k$. 

\begin{defi}
\label{cremona}
    The Cremona action in $\NS(\P^2_ n)$ is given by the linear group $$W=W_n=\left<\varphi_{ijk}\right>\subset\GL(n+1,\R),$$ generated by all $n\choose3$ transformations $\varphi_{ijk}$ with distinct $i,j,k$ $\in$ $\{1,2,\dots,n\}$. 
\end{defi}

We can choose different sets of generators for $W$. For instance, the composition $\sigma_{12}:=\varphi_{134}\circ\varphi_{234}\circ\varphi_{134}$ coincides with the linear transformation in $\R^{n+1}$ that permutes the coordinates $x_1$ and $x_2$. Let $\sigma_{i,i+1}$ the linear transformation that permutes $x_i$ and $x_{i+1}$.
%Analogously, every permutation $\sigma_{i,i+1}$ of the coordinates $x_i$ and $x_{i+1}$ for $i=1,2,\dots,{n-1}$ is in $W$. 
One can check that $$W=\left<\varphi_{123},\sigma_{1,2},\sigma_{2,3},\dots\sigma_{{n-1},n}\right>.$$ We write $w_0:=\varphi_{123}$ and $w_i:=\sigma_{i,i+1}$ to simplify the notation. 
In this way, 
\begin{equation}
\label{geradores}
W=\left<w_0,w_1,\dots,w_{n-1}\right>
\end{equation} 
is generated by $n$ elements of order $2$.

\begin{remark}
    For $n\le 8$, the group $W$ is finite, and is also known as a Weyl Group in the context of semisimple Lie algebras. For $n\ge9$, $W$ is infinite.
\end{remark}

\begin{defi}
\label{very_general_points}
Let $n$ points $P_1,P_2,\dots,P_n\in\P^2$, no three of them in a line. Take any quadratic transformation centered at three of them, say $P_1,P_2$ and $P_3$. This gives us a new set of points: $P_1,P_2,P_3$ and the images of $P_4,\dots,P_n$.
    We say that the points $P_1,P_2,\dots,P_n\in\P^2$ are \textit{very general} or \textit{in very general position} if for any iteration of this process, the new set of points does not contain three in a line. 
\end{defi}

From now on, let $S=S_n=\P^2_n$ be the blowup of $\P^2$ in $n$ very general points. 
%\textcolor{red}{\begin{defi}We call a ray rational if it passes through a point of the lattice $\Pic(S)\cong\Z^n$.\end{defi}}
In $\NS(S)$, we consider the following important cones. 

\begin{defi}The effective cone of $S$, \[{\Eff}(S)={\{\sum_{i=1,\dots,r} a_i c_i \mid r\in\Z_{\ge0}, a_i\in\R_{>0}, c_i\in\NS(S) \text{ is the class of a curve} \}},\] is the cone of classes of effective $\R$-divisors, which is not necessarily closed. Its closure is called the Mori cone $$\overline{\text{NE}}(S)=\overline{\Eff(S)}.$$ The nef cone of $S$ is the dual cone of $\overline{\text{NE}}(S)$, 
\[\Nef(S)=\{v\in\NS(S)\mid v\cdot c\ge0 \text{ for every curve } c\subset S\}.\] 
\end{defi}

Since $W$ acts on $\NS(S)$ sending classes of curves in classes of curves, then all these cones %$\overline{NE}(S)$ and $\Nef(S)$ 
are $W$-invariants. Moreover, this action preserves the intersection form and the lattice $\Pic(S)$. The geometry of these cones is related to $-1$-curves, which are defined below.

\begin{defi}
    A $-1$-curve in $S$ is a curve $c\subset S$ isomorphic to $\P^1$, such that $c^2=-1$. By abuse of notation, we sometimes write $c$ to denote its class in $\NS(S)$.
\end{defi} 

In general, for $n>9$ the geometry of the cones defined above is not completely understood. We mention he following famous conjecture (see \cite{Tommaso} and its errata \cite{errata} for a deeper discussion).
% and the "conjectured nef effective cone" of $S$, 
\begin{conj}[$-1$-Curves Conjecture]
\label{conj-1}
The Mori cone of $S$ is generated by the light cone and the $-1$-curves. Dually, the nef cone of $S$ is given by \[\Nef(S)=\{v\in\L\mid v\cdot c\ge0 \text{ for every }-1\text{-curve } c\subset S\}.\]
\end{conj}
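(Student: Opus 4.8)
The plan is to exploit the fact that the two displayed assertions are dual to one another and to reduce both to a single statement about negative curves. Since $\Nef(S)$ is by definition the dual cone of $\overline{\text{NE}}(S)$, it suffices to treat the nef-cone formulation. One inclusion is immediate: if $v\in\Nef(S)$ then $v^2\ge0$, because a nef class on a surface has non-negative self-intersection, so $v\in\L$; and $v\cdot c\ge0$ for every curve $c$, in particular for every $(-1)$-curve. Thus $\Nef(S)$ is always contained in the right-hand side, and the content lies entirely in the reverse inclusion.

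To set up the reverse inclusion, I would fix a class $v\in\L$ in the forward cone ($v\cdot e_0\ge0$) with $v\cdot c\ge0$ for every $(-1)$-curve $c$, and show $v\cdot C\ge0$ for every irreducible curve $C$. If $C^2\ge0$, then $C$ also lies in the forward light cone (an irreducible curve of non-negative self-intersection has positive degree $C\cdot e_0$), and the reverse Cauchy--Schwarz inequality for two future-directed vectors in signature $(1,n)$ gives $v\cdot C\ge0$ for free. Hence the whole conjecture collapses to the single claim that \emph{every irreducible curve $C\subset S$ with $C^2<0$ is a $(-1)$-curve}. On the Mori side this is the same statement: such a $C$ spans an extremal ray lying off $\partial\L$, so establishing the claim also shows that $\overline{\text{NE}}(S)$ is generated by $\L$ together with the $(-1)$-curves.

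I would attack this negative-curve claim by splitting along the sign of $K_S\cdot C$, using adjunction $C^2+K_S\cdot C=2p_a(C)-2\ge-2$. When $K_S\cdot C<0$ the inequalities $C^2\le-1$ and $K_S\cdot C\le-1$ force $-2\le C^2+K_S\cdot C\le-2$, hence $C^2=K_S\cdot C=-1$ and $p_a(C)=0$: so $C$ is automatically a $(-1)$-curve, and this region (which is precisely the $K_S$-negative part governed by the Cone Theorem) causes no trouble. The difficulty concentrates entirely in the range $K_S\cdot C\ge0$, $C^2\le-2$, where the potential offenders are $(-2)$-curves and worse. Here the strategy would be to bring in the Cremona action: use the generators $\varphi_{ijk}$ to reduce the class of a hypothetical such curve to a normal form of minimal degree, and then contradict the existence of an irreducible representative by an interpolation estimate, showing that very general base points impose independent conditions forcing the actual and expected dimensions of the relevant linear system to agree.

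This reduction-and-interpolation step is the main obstacle, and it is exactly what keeps the statement a conjecture. Excluding irreducible curves with $C^2\le-2$ on the blowup at very general points is essentially equivalent to the Segre--Harbourne--Gimigliano--Hirschowitz problem on dimensions of linear systems of plane curves with general multiple base points, and hence to Nagata-type positivity; no unconditional argument is known for large $n$. Because $W$ is infinite once $n\ge9$, the normal-form reduction does terminate, but one is then left having to certify that each of the finitely many reduced classes carries no unexpected effective curve -- precisely the open estimate. I would therefore expect a complete proof only for $n$ small enough that the finitely many reduced classes can be checked by hand, and would otherwise record the general case as the conjecture it is.
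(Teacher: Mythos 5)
You were not actually given a provable statement: Conjecture \ref{conj-1} is stated in the paper as an open conjecture, known for $n\le 9$ but open for $n\ge 10$, and the paper offers no proof of it, only pointers to the literature. Your proposal correctly recognizes this, and the reduction you outline is sound: by duality and the reverse Cauchy--Schwarz inequality for forward-directed vectors in signature $(1,n)$, the conjecture is equivalent to the claim that every irreducible curve $C\subset S$ with $C^2<0$ is a $(-1)$-curve; adjunction settles this whenever $K_S\cdot C<0$ (forcing $C^2=K_S\cdot C=-1$, $p_a(C)=0$); and the remaining range $K_S\cdot C\ge 0$, $C^2\le -2$ is exactly the interpolation problem for very general points (the SHGH/Nagata circle of ideas) that keeps the statement conjectural. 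The only caveat is your phrase ``essentially equivalent'' to SHGH: what is standard is that SHGH-type statements imply the $(-1)$-curves claim, and the precise logical relationship between the two is more delicate, so it is safer to say ``closely related.''

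It is worth noting that your observation that the $K_S$-negative region ``causes no trouble'' is precisely how the paper sidesteps the conjecture for $n\ge 10$: Remark \ref{lemma4.1_Tommaso} quotes an unconditional result giving that $\{v\in\Nef(S)\mid v\cdot K_S\le 0\}$ is cut out by the light cone and the $(-1)$-curves, and this is why Theorem \ref{main theo n>9} is formulated only for $\Nef^\text{e}(S)_{K_S\le0}$ rather than for the whole nef cone, while for $n=9$ the conjecture is available outright. So your assessment --- provable on the $K_S$-negative side, conjectural beyond it --- agrees with both the paper and the state of the art, and there is no gap to report beyond the fact that neither you nor anyone else can currently prove the statement in full.
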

%\[\ConjNef^\text{e}(S)=\ConjNef(S)\cap\text{Eff}(S)\] These cones are $W$-invariants as well. For $n\le9,$ it holds that $\ConjNef^\text{e}(S)=\ConjNef(S)=\Nef(S)=\Nef^\text{e}(S).$}

%Since they are dual to each other, the cone $\Nef(S)$ is delimited by the orthogonal hyperplanes of each extremal ray of $\overline{NE}$.

\begin{defi}
    An \textit{extremal ray} of a convex cone $\mathcal {C}$ in $\R^{1,n}$ is a ray $\mathcal{R}\subset\mathcal{C}$ with the following property: if $u,v\in\mathcal{C}$ are such that $u+v\in \mathcal{R}$, then $u,v\in \mathcal{R}$. A ray in $\NS(S)$ is said to be rational if it contains some non zero divisor class of $S$. Otherwise, we call it irrational. 
\end{defi}

\begin{defi}
    A convex cone in $\R^{1,n}$ is said to be \textit{polyhedral} if it generated by finitely many vectors, that is, if it contains finitely many extremal rays. A polyhedral cone in $\NS(S)$ is said to be \textit{rational} if all its finitely many extremal rays are rational. %Given a point $v\in\R^{1,n}$, its \textit{orthogonal hyperplane} is the set of points $\{u\in\R^{1,n}\mid u\cdot v=0\}$ with respect to the product in $\R^{1,n}$.
\end{defi}

%If $\overline{NE}(S)$ is a polyhedral cone, then for each extremal ray of $\overline{NE}(S)$, its orthogonal hyperplane is a facet of $\Nef(S)$.

When $n\le8$, the surface $S$ is Fano. By the Kleiman's ampleness criterion, this is equivalent to the anticanonical class $-K_S\in\NS(S)$ lying in the interior of the nef cone. As a consequence of \cite{BCHM} Corollary 1.3.2, every smooth Fano variety has a rational polyhedral nef cone. If $3\le n\le8$, the cone $\overline{\text{NE}}(S)$ is generated by the finitely many $-1$-curves $c$, which form an orbit of the action of $W$. Each of them defines a halfspace $\{v\in\NS(S)\mid v\cdot c\ge0\}$ in $\NS(S)$, and Conjecture \ref{conj-1} holds. 

\begin{figure}[h]
\centering

\tikzset{every picture/.style={line width=0.75pt}} %set default line width to 0.75pt        

\begin{tikzpicture}[x=0.75pt,y=0.75pt,yscale=-1,xscale=1]
%uncomment if require: \path (0,431); %set diagram left start at 0, and has height of 431

%Straight Lines [id:da21254078090849204] 
\draw    (249,123) -- (343,331) ;
%Straight Lines [id:da4631568528560712] 
\draw    (481,128) -- (343,331) ;
%Straight Lines [id:da7550975286014527] 
\draw  [dash pattern={on 0.84pt off 2.51pt}]  (301,98) -- (343,331) ;
%Straight Lines [id:da6671086998793978] 
\draw  [dash pattern={on 0.84pt off 2.51pt}]  (402,95) -- (343,331) ;
%Straight Lines [id:da1296794985076648] 
\draw    (347,164) -- (343,331) ;
%Straight Lines [id:da11723041315401472] 
\draw    (276,183) -- (345,247.5) ;
%Straight Lines [id:da7072690230843024] 
\draw    (345,247.5) -- (428,205) ;
%Straight Lines [id:da26799715151076997] 
\draw  [dash pattern={on 0.84pt off 2.51pt}]  (308,142) -- (276,183) ;
%Straight Lines [id:da5302143034147306] 
\draw  [dash pattern={on 0.84pt off 2.51pt}]  (308,142) -- (391,142) ;
%Straight Lines [id:da5239730092070909] 
\draw  [dash pattern={on 0.84pt off 2.51pt}]  (391,142) -- (428,205) ;
%Shape: Circle [id:dp6059987237893298] 
\draw  [fill={rgb, 255:red, 74; green, 144; blue, 226 }  ,fill opacity=1 ][line width=0.75]  (318,152) .. controls (318,149.79) and (319.79,148) .. (322,148) .. controls (324.21,148) and (326,149.79) .. (326,152) .. controls (326,154.21) and (324.21,156) .. (322,156) .. controls (319.79,156) and (318,154.21) .. (318,152) -- cycle ;

% Text Node
\draw (319.94,155.43) node [anchor=north west][inner sep=0.75pt]  [font=\scriptsize,color={rgb, 255:red, 74; green, 144; blue, 226 }  ,opacity=1 ,rotate=-1.01]  {$-K_{S}$};

\end{tikzpicture}
\caption{A sketch of $\Nef(S)$ for $3\le n\le8$.}
\end{figure}

We are actually interested in the case $n\ge9$, when there are infinitely many $-1$-curves, and each of them generates an extremal ray, so $\overline{NE}(S)$ and $\Nef(S)$ are not polyhedral. %(Later on we would like to find a polyhedral fundamental domain of the cone $\Nef(S)$ when $n=9$, with respect to the action described in the next sections.)
When $n=9$, the extremal rays of $\overline{NE}(S)$ are generated by the infinitely many $-1$-curves, which form an orbit of the action of $W$, and also the ray spanned  by $-K_S$, which is a limit ray. Similarly, the extremal rays of $\Nef(S)$ are spanned  by the $W$-orbits of $(1,0,0,0,0,0,0,0,0,0)$ and $(1,-1,0,0,0,0,0,0,0,0)$ and by $-K_S$. The extremal rays spanned  by the two orbits accumulate in the ray spanned  by $-K_S$. In this case, $K_S^2=0$, and both the Nef cone and the Mori cone are contained in the halfspace $\{v\in\NS(S)\mid v\cdot K_S\le0\}$. Moreover, Conjecture \ref{conj-1} also holds in this case.
%\begin{center}
%\includegraphics[width=1\linewidth]{cone nef 9.pdf}
%\end{center}

\begin{figure}[h]
    \centering

\tikzset{every picture/.style={line width=0.75pt}} %set default line width to 0.75pt        

\begin{tikzpicture}[x=0.75pt,y=0.75pt,yscale=-1,xscale=1]
%uncomment if require: \path (0,300); %set diagram left start at 0, and has height of 300

%Straight Lines [id:da5060637720366294] 
\draw [color={rgb, 255:red, 74; green, 144; blue, 226 }  ,draw opacity=1 ]   (222,86) -- (328,248) ;
%Straight Lines [id:da7451827494646277] 
\draw    (384,101) -- (328,248) ;
%Straight Lines [id:da02115624545135386] 
\draw    (307,119) -- (328,248) ;
%Straight Lines [id:da1135376628702982] 
\draw    (272,121) -- (328,248) ;
%Straight Lines [id:da42781074730422364] 
\draw    (250,112) -- (328,248) ;
%Straight Lines [id:da21126043831690966] 
\draw    (238,102) -- (328,248) ;
%Straight Lines [id:da4267253957924577] 
\draw  [dash pattern={on 0.84pt off 2.51pt}]  (343,57) -- (328,248) ;
%Straight Lines [id:da4849003696968055] 
\draw  [dash pattern={on 0.84pt off 2.51pt}]  (263,57) -- (328,248) ;
%Straight Lines [id:da2742457590749441] 
\draw  [dash pattern={on 0.84pt off 2.51pt}]  (236,69) -- (328,248) ;
%Straight Lines [id:da061563683081938825] 
\draw  [dash pattern={on 0.84pt off 2.51pt}]  (227,77) -- (328,248) ;
%Straight Lines [id:da21367429781091496] 
\draw    (260,133) -- (282,143) ;
%Straight Lines [id:da6203949926315895] 
\draw    (253,127) -- (260,133) ;
%Straight Lines [id:da8982998944325113] 
\draw    (282,143) -- (312,147) ;
%Straight Lines [id:da7498631072137647] 
\draw    (312,147) -- (376,124) ;
%Straight Lines [id:da9022322200089512] 
\draw  [dash pattern={on 0.84pt off 2.51pt}]  (341,79) -- (376,124) ;
%Straight Lines [id:da5601730489671257] 
\draw  [dash pattern={on 0.84pt off 2.51pt}]  (269,75) -- (341,79) ;
%Straight Lines [id:da14358920545372245] 
\draw  [dash pattern={on 0.84pt off 2.51pt}]  (244,85) -- (269,75) ;
%Straight Lines [id:da6724968756558543] 
\draw  [dash pattern={on 0.84pt off 2.51pt}]  (236,93) -- (244,85) ;
%Shape: Circle [id:dp9002498520748106] 
\draw  [fill={rgb, 255:red, 74; green, 144; blue, 226 }  ,fill opacity=1 ] (233,111) .. controls (233,108.79) and (234.79,107) .. (237,107) .. controls (239.21,107) and (241,108.79) .. (241,111) .. controls (241,113.21) and (239.21,115) .. (237,115) .. controls (234.79,115) and (233,113.21) .. (233,111) -- cycle ;

% Text Node
\draw (215,119.4) node [anchor=north west][inner sep=0.75pt]  [font=\footnotesize]  {$\textcolor[rgb]{0.29,0.56,0.89}{-K}\textcolor[rgb]{0.29,0.56,0.89}{_{S}}$};

\end{tikzpicture}
\caption{A sketch of $\Nef(S)$ for $n=9.$}

\end{figure}

When $n\ge10$, Conjecture \ref{conj-1} is not known to hold. What we do know is that if $\{\mathcal{R}_i\}_{i\in I}$ are all the rays spanned  by $-1$-curves $c_i$, then the Mori cone $\overline{NE}(S)$ contains the cone generated by the light cone and all $\mathcal{R}_i$, while Conjecture \ref{conj-1} states equality. In addition, we know that any $-1$-curve $c_i$ satisfies $c_i\cdot K_S=-1<0$. If there is any other extremal ray $\mathcal{R}$ of $\overline{NE}(S)$, spanned  by a vector $v$ outside the light cone, it satisfies $v\cdot K_S>0$. 

\begin{remark}
\label{lemma4.1_Tommaso}
    By \cite{errata}, Lemma 4.1, $$\overline{NE}(S)\subset\overline{\mathcal{R}(-K_S)+ \sum\mathcal{R}_i}.$$ Taking the dual cones, we get that $$\Nef(S)\supset\{v\mid v\cdot K_S\le0 \text{ and } v\cdot c_i\ge0\ \forall i\}$$ and conclude by the definition of $\Nef(S)$ that $$\{v\in\Nef(S)\mid v\cdot K_S\le0\}=\{v\mid v\cdot K_S\le0 \text{ and } v\cdot c_i\ge0\ \forall i\}.$$ 
\end{remark}

\begin{figure}[h]

\tikzset{every picture/.style={line width=0.75pt}} %set default line width to 0.75pt        

\begin{tikzpicture}[x=0.75pt,y=0.75pt,yscale=-1,xscale=1]
%uncomment if require: \path (0,300); %set diagram left start at 0, and has height of 300

%Shape: Circle [id:dp007796751650238631] 
\draw  [fill={rgb, 255:red, 74; green, 144; blue, 226 }  ,fill opacity=1 ] (98,66.5) .. controls (98,64.57) and (99.57,63) .. (101.5,63) .. controls (103.43,63) and (105,64.57) .. (105,66.5) .. controls (105,68.43) and (103.43,70) .. (101.5,70) .. controls (99.57,70) and (98,68.43) .. (98,66.5) -- cycle ;
%Straight Lines [id:da5666650520200023] 
\draw [color={rgb, 255:red, 74; green, 144; blue, 226 }  ,draw opacity=1 ]   (309.04,30.82) -- (245.05,134.58) ;
%Straight Lines [id:da24284002629652401] 
\draw    (180,100) -- (309,274) ;
%Straight Lines [id:da6093231444039975] 
\draw [color={rgb, 255:red, 74; green, 144; blue, 226 }  ,draw opacity=1 ]   (245.05,134.58) -- (309,274) ;
%Straight Lines [id:da2778471355544585] 
\draw [color={rgb, 255:red, 74; green, 144; blue, 226 }  ,draw opacity=1 ] [dash pattern={on 0.84pt off 2.51pt}]  (309.04,30.82) -- (309,274) ;
%Straight Lines [id:da9672569065560226] 
\draw    (375,70) -- (309,274) ;
%Straight Lines [id:da7013743741132001] 
\draw    (337,119) -- (309,274) ;
%Straight Lines [id:da8354948679748235] 
\draw  [dash pattern={on 0.84pt off 2.51pt}]  (362,41) -- (309,274) ;
%Straight Lines [id:da42289222689072603] 
\draw    (299,132) -- (309,274) ;
%Straight Lines [id:da17576339223952897] 
\draw    (268,135) -- (309,274) ;
%Straight Lines [id:da5971670903763155] 
\draw  [dash pattern={on 0.84pt off 2.51pt}]  (318,33) -- (309,274) ;
%Straight Lines [id:da6104684633655297] 
\draw    (299,132) -- (268,135) ;
%Straight Lines [id:da5372485999390282] 
\draw    (337,119) -- (299,132) ;
%Straight Lines [id:da5633202901844068] 
\draw    (375,70) -- (337,119) ;
%Straight Lines [id:da7733896917793361] 
\draw    (362,41) -- (375,70) ;
%Straight Lines [id:da5026179866636307] 
\draw    (318,33) -- (362,41) ;
%Straight Lines [id:da09046646161339056] 
\draw    (252,134) -- (309,274) ;
%Straight Lines [id:da2151478049219291] 
\draw    (268,135) -- (252,134) ;
%Curve Lines [id:da5296586456167733] 
\draw    (309.04,30.82) .. controls (217,17) and (96,92) .. (245.05,134.58) ;

% Text Node
\draw (103.5,73.4) node [anchor=north west][inner sep=0.75pt]    {$\textcolor[rgb]{0.29,0.56,0.89}{-K_{S}}$};

\end{tikzpicture}

\caption{A sketch of $\Nef(S)$ for $n=10$ if conjecture \ref{conj-1} holds.}
\end{figure}

\begin{defi}
\label{nef efetivo}The nef effective cone of $S$ is the intersection $$\Nef^{\text{e}}(S)=\Nef(S)\cap\Eff(S).$$
\end{defi} 
\begin{remark}
It is well known that $$\Nef(S)\subset\L\subset\overline{\text{NE}}(S).$$ In fact, $$\Nef(S)\backslash\Nef^\text{e}(S)\subset\partial\L$$
is the union of the irrational rays of $\Nef(S)\cap\partial\L$.

For $n\le9$, $$\Nef(S_n)=\Nef^\text{e}(S_n).$$ %However, for $n\ge10$, $$\Nef(S_n)\backslash\Nef^\text{e}(S_n)\neq\emptyset.$$
\end{remark}

\begin{defi} The $K_S$-negative side of $\Nef^\text{e}(S)$ is
\begin{align*}
    \Nef^\text{e}(S)_{K_S\le0}:&=\{v\in\Nef^\text{e}(S)\mid v\cdot K_S\le0\}\\ &=\{v\mid v\cdot K_S\le0 \text{ and } v\cdot c_i\ge0\ \forall i\}\cap\Eff(S).
\end{align*}
\end{defi}

\section{The Cremona Action on $\H^n$}

%In this section, we define the hyperbolic space $\H^n$ as a subset of $\NS(S)\cong\R^{1,n}$ and interpret the nef cone (actually, its projetivization) as a subset of its compactification $\overline{\H^n}$. In order to find the desired fundamental domain, we proceed in two steps: finding a fundamental domain of the entire hyperbolic space with respect to the action of $W$, and then intersecting it with the nef cone. The relevance of the hyperbolic space in our context is not only that we will find $W$ to be a reflection group in $\H^n$, but also that we may interpret the nef cone (more precisely, its projectivisation) as a subset of $\overline{\H^n}$. 

In this section, we interpret the Cremona action as an action by reflections in $\H^n$, and the projectivisation of the nef cone as a subset of $\overline{\H^n}$. This bridge is what we need to find the desired fundamental cone. 
Recall that $S=S_n=\P^2_n$ is the blowup of $\P^2$ in $n$ very general points, as in Definition \ref{very_general_points}. From now on, we always refer to $\NS(S)$ as $\R^{1,n}$ with the base described in the previous section.

Consider the Cremona action of $W=\left<w_0,w_1,\dots w_{n-1}\right>$ in $\R^{1,n}$ from Definition \ref{cremona}. Recall that the linear maps $w_i$ preserve the product in $\R^{1,n}$. In particular, they preserve the hyperboloid $\{v\in\R^{1,n}\mid v^2=1\}$, which has two connected components, so each of these maps either preserves or switches them. 
Since $e_0$ and $w_i(e_0)$ lie on the same component for any $i=0, 1, \dots, n-1$, all these linear transformations of $\R^{1,n}$ preserve the hyperbolic space $\H^{n}$ and its metric, as in Definition \ref{hyperbolic space}.

The restrictions of these generators of $W$ to $\H^n$ are reflections, as introduced in Definition \ref{reflection}. The set of fixed points of $w_0$ is given by $x_0+x_1+x_2+x_3=0$, which is the orthogonal hyperplane to the normal vectors $\pm(e_0-e_1-e_2-e_3)$ in $\H^n$. The set of fixed points of $w_i$ for $i=1,\dots,n-1$ is given by $x_i-x_{i+1}=0$, which is the orthogonal hyperplane to the normal vectors $\pm(e_i-e_{i+1})$ in $\H^n$. %We still can't conclude that $W$ acts on $\H^n$ as a reflection group, since its orbits must be discrete, but soon enough we will.

\begin{theorem}
The polytope in $\H^n$ 
\begin{equation}
\label{p~}
    \Tilde{\mathcal{P}}=\Tilde{\mathcal{P}}_n:=\{x_0\ge -x_1-x_2-x_3\text{ and }x_1\le x_2\le \dots\le x_n\}
\end{equation} 
is a fundamental domain of $\H^n$ with respect to the action of $W$.
\end{theorem}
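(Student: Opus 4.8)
The plan is to recognize $\widetilde{\mathcal{P}}_n$ as a Coxeter polytope whose facet reflections are exactly the generators $w_0,\dots,w_{n-1}$ of $W$, and then to invoke Theorem \ref{teorema1}. First I would record the inward normal vectors of the two families of halfspaces cutting out $\widetilde{\mathcal{P}}_n$: the inequality $x_0+x_1+x_2+x_3\ge 0$ corresponds to $u_0=e_0-e_1-e_2-e_3$, and each inequality $x_i\le x_{i+1}$ corresponds to $u_i=e_i-e_{i+1}$ for $i=1,\dots,n-1$. Each satisfies $u_i^2=-2$, and the reflection $v\mapsto v+(v\cdot u_i)u_i$ in the hyperplane $u_i^{\perp}$ is precisely $w_i$ (for $u_0$ this is $\varphi_{123}$, fixing $x_0+x_1+x_2+x_3=0$; for $u_i$ with $i\ge 1$ it is the transposition $\sigma_{i,i+1}$). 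Thus the group generated by the reflections in the facets of $\widetilde{\mathcal{P}}_n$ is $W$, \emph{provided} all $n$ halfspaces are genuine facets.

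Next I would compute the Cartan matrix (Definition \ref{cartan}) from the products $u_i\cdot u_j$. A direct computation gives $u_i\cdot u_{i+1}=1$ and $u_i\cdot u_j=0$ for $|i-j|\ge 2$ among the permutation normals, while $u_0\cdot u_3=1$ and $u_0\cdot u_i=0$ for $i\in\{1,2\}\cup\{4,\dots,n-1\}$. Hence every off-diagonal entry of the Cartan matrix is $0$ or $-1$, so every angle between two distinct facets is $\tfrac{\pi}{2}$ (when $u_i\cdot u_j=0$) or $\tfrac{\pi}{3}$ (when $u_i\cdot u_j=1$). In particular there are no divergent pairs and no angle equal to $0$, so $\widetilde{\mathcal{P}}_n$ is a Coxeter polytope; its diagram is the $T$-shaped graph obtained from the chain $u_1\!-\!u_2\!-\!\cdots\!-\!u_{n-1}$ by attaching $u_0$ at the node $u_3$ (the affine $\widetilde{E}_8$ diagram when $n=9$).

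The one point requiring care, and the main obstacle, is verifying that the collection of $n$ halfspaces is irredundant (Remark \ref{consistencia}), i.e.\ that each $u_i^{\perp}$ supports a genuine facet; this is exactly what guarantees that the facet reflection group is all of $W$ rather than a proper subgroup (dropping $u_0$, for instance, would leave only the finite permutation group $\langle w_1,\dots,w_{n-1}\rangle$, which cannot have a finite-volume fundamental polytope for the infinite $W$). For this I would, for each $i$, exhibit a point $p\in\H^n$ lying on $u_i^{\perp}$ and strictly inside every other halfspace. For a permutation normal $u_i$ ($i\ge 1$) one takes a strictly increasing sequence $x_1<\cdots<x_n$ with a single equality $x_i=x_{i+1}$ and $x_0$ large, so that $p^2>0$, $p_0>0$ and $x_0+x_1+x_2+x_3>0$. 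The delicate case is $u_0$ for large $n$: one cannot keep all coordinates comparable in size, since on the hyperplane $x_0=-(x_1+x_2+x_3)$ the condition $p^2>0$ reads $2(x_1x_2+x_1x_3+x_2x_3)>x_4^2+\cdots+x_n^2$. The resolution is to send $x_1,x_2,x_3\to-\infty$ (say $x_1=-M-2,\ x_2=-M-1,\ x_3=-M$) while keeping $x_4<\cdots<x_n$ bounded and positive; then $p^2=(3M+3)^2-\bigl[(M+2)^2+(M+1)^2+M^2\bigr]-\sum_{k=1}^{n-3}k^2=6M^2+O(M)>0$ for $M\gg 0$, and all the remaining inequalities are strict.

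Once full-dimensionality (take $x_0$ large and $x_j=j$ to produce an interior point) and the irredundancy of all $n$ facets are established, Theorem \ref{teorema1} applies directly: the group generated by the reflections in the facets of the Coxeter polytope $\widetilde{\mathcal{P}}_n$ is a reflection group, and $\widetilde{\mathcal{P}}_n$ is a fundamental domain for its action on $\H^n$. Since those facet reflections are exactly $w_0,\dots,w_{n-1}$, that group is $W$, completing the proof.
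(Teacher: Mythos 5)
Your proposal is correct and takes essentially the same route as the paper: identify the normal vectors $e_0-e_1-e_2-e_3$ and $e_i-e_{i+1}$, compute the Cartan matrix, observe that all angles between distinct facets are $\tfrac{\pi}{2}$ or $\tfrac{\pi}{3}$ so that $\Tilde{\mathcal{P}}_n$ is a Coxeter polytope whose facet reflections are exactly $w_0,\dots,w_{n-1}$, and invoke Theorem \ref{teorema1}. The only difference is that you explicitly verify irredundancy of the $n$ halfspaces (exhibiting points of $\H^n$ on each hyperplane interior to all the others), a point the paper handles implicitly through the convention of Remark \ref{consistencia}; this is a legitimate extra check, not a different argument.
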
 

\begin{proof}
The inequalities $x_0\ge -x_1-x_2-x_3$ and $x_1\le x_2\le \dots\le x_n$ defining the polytope $\Tilde{\mathcal{P}}$ correspond to the halfspaces $(v_0=e_0-e_1-e_2-e_3)^{\ge0}$ and $(v_i=e_{i}-e_{i+1})^{\ge0}$, $i=1,\dots,n-1$, respectively. The Cartan Matrix of this polytope $C_{\Tilde{\mathcal{P}}}=[a_{ij}=-v_i\cdot v_j=-2\cos(\theta_{ij})]$ is 

\begin{equation*}
C_{\Tilde{\mathcal{P}}}=
\begin{pmatrix}
2 & 0 & 0 & -1 & 0 &  \dots & 0 & 0 \\
0 & 2 & -1 & 0 & 0 &  \dots & 0 & 0 \\
0 & -1 & 2 & -1 & 0 &  \ddots & \vdots & \vdots \\
-1 & 0 & -1 & 2 & -1 &  \ddots & 0 & 0 \\
0 & 0 & 0 & -1 & 2 &  \ddots & 0 & 0 \\
 \vdots & \vdots  &  \ddots & \ddots  & \ddots  & \ddots & -1 & 0  \\
0 & 0 & \dots & 0 & 0 & -1 & 2 & -1  \\
0 & 0 & \dots & 0  & 0 & 0  & -1 & 2  \\
\end{pmatrix}.
\end{equation*}

All the entries are $2$, $0$ and $-1$, and the angles associated to the corresponding pair of halfspaces are $\pi$, $\frac\pi2$ and $\frac\pi3$ respectively, as in Definition \ref{cartan}. The angles are all submultiples of $\pi$, therefore $\Tilde{\mathcal{P}}$ is a Coxeter polytope. By Theorem \ref{teorema1}, $\Tilde{\mathcal{P}}$ is a fundamental domain of $\H^n$ with respect to the action of $W$. %The Coxeter diagram of the polytope $\Tilde{\mathcal{P}}_9$ is given by 
%\begin{center}
%\dynkin[Coxeter,backwards] E{9}    
%\end{center}

%\begin{center}
%\begin{tikzpicture}
  %[scale=.8,auto=left,every node/.style={circle,draw=black,fill=none}]
  
  %\node (n1) at (1,10) {};
  %\node (n2) at (2,10) {};
  %\node (n3) at (3,10) {};
  %\node (n4) at (4,10) {};
  %\node (n5) at (5,10) {};
  %\node (n6) at (6,10) {};
  %\node (n7) at (7,10) {};
  %\node (n8) at (8,10) {};
  %\node (n9) at (3,11) {};  % Novo vértice

  %\foreach \from/\to in {n1/n2,n2/n3,n3/n4,n4/n5,n5/n6,n6/n7,n7/n8,n3/n9}
    %\draw (\from) -- (\to);

%\end{tikzpicture}.
%\end{center}
\end{proof}

%It is known that all elements of $\Nef(S)\subset\R^{1,n}$ have nonnegative self-intersection. By Definition of the light cone in $\R^{1,n}$, that means, $\Nef(S)\subset\L_n.$ We set $$\text{nef}(S)=\P(\Nef(S))=\frac{\Nef(S)}{\sim}\subset\frac{\L_n}{\sim}=\overline{\H^n},$$

\begin{defi}
We define %$$\gamma=\gamma_n=\P(\Nef^\text{e}(S))\subset\overline{\H^n}$$ as the projectivisation of the nef effective cone of $S$, which we know is given by a polytope in $\H^n$, together with the rational points of its closure on $\overline{\H^n}$, 
%$$\overline{\gamma}=\overline{\gamma}_n=\P(\ConjNef(S))=\frac{\ConjNef(S)}{\sim}\subset\overline{\H^n},$$ and 
the polytope
$$\beta=\beta_n=\bigcap_{-1\text{-curve}\ c}(\sqrt{2}c)^{\ge0}\subset\H^n.$$ If Conjecture \ref{conj-1} holds, then $$\beta=\Nef^\text{e}(S)\cap\H^n=\Nef(S)\cap\H^n\subset\H^n.$$

%The last one is the subset of the conjectured nef cone that intersects the sheet of hyperboloid $\H^n$. We know that $\beta\subset\gamma$.
\end{defi}

\begin{remark}
The $\sqrt2$ appears in the formula so that the normal vector satisfies $(\sqrt2c)^2=-2$, as required in Definition \ref{ort}.
\end{remark}

 Finding a fundamental domain $\mathcal{C}$ for $\Nef^\text{e}(S)$ with respect to the action of $W$ in $\R^{1,n}$ is related to finding a fundamental domain $\mathcal{P}=\mathcal{C}\cap\H^n$ for $\beta_n$ with respect to the action of $W$ in $\H^n$. Moreover, $\mathcal{C}$ will be a polyhedral cone if and only if $\mathcal{P}$ has finite volume.

\begin{lemma}
\label{lema}
    If $\Tilde{\mathcal{P}}_n$ is a fundamental domain of $\H^n$ with respect to $W$, then the intersection $\mathcal{P}=\mathcal{P}_n=\Tilde{\mathcal{P}}_n\cap\beta_n$ is a fundamental domain of $\beta_n$ with respect to $W$. 
\end{lemma}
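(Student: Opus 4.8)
Mirroring the paper's notation ($\Tilde{\mathcal{P}}_n$, $\beta_n$, $\mathcal{P}=\Tilde{\mathcal{P}}_n\cap\beta_n$), here is the plan.

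The plan is to verify the two conditions of Definition \ref{dom fund} for the topological space $T=\beta_n$ and the group $G=W$, after first making sure that $W$ really acts on $\beta_n$. Since $W$ preserves the intersection form and fixes $K_S$, it permutes the classes of $-1$-curves, hence permutes the halfspaces $(\sqrt2c)^{\ge0}$ and satisfies $w\beta_n=\beta_n$ for every $w\in W$. As $W$ also preserves $\H^n$ and acts faithfully on it as a reflection group, and $\beta_n$ is full-dimensional in $\H^n$ (it contains $\Nef(S)\cap\H^n$, which has nonempty interior), the restricted action on $\beta_n$ is again faithful: an element fixing $\beta_n$ pointwise would fix an open subset of $\H^n$ and hence be the identity. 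I would also note that $\mathcal{P}=\Tilde{\mathcal{P}}_n\cap\beta_n$ is closed in $\beta_n$.

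For the covering condition I would use the $W$-invariance of $\beta_n$ to write $w\mathcal{P}=w(\Tilde{\mathcal{P}}_n\cap\beta_n)=w\Tilde{\mathcal{P}}_n\cap\beta_n$, so that
\[
\bigcup_{w\in W}w\mathcal{P}=\Big(\bigcup_{w\in W}w\Tilde{\mathcal{P}}_n\Big)\cap\beta_n=\H^n\cap\beta_n=\beta_n,
\]
where the middle equality is precisely the covering property of the fundamental domain $\Tilde{\mathcal{P}}_n$ of $\H^n$.

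The core of the argument is the tiling condition $\mathcal{P}^{\circ}\cap(w\mathcal{P})^{\circ}=\emptyset$ for $w\ne e$, where $\circ$ denotes the interior taken \emph{relative to} $\beta_n$. The delicate point is exactly that this interior is computed in $\beta_n$, which has nonempty boundary, not in $\H^n$. I would reduce to the interior $\beta_n^{\circ}$ of $\beta_n$ inside $\H^n$ via two observations. First, $\beta_n$ is a closed, convex, full-dimensional subset of $\H^n$, so it is the closure of $\beta_n^{\circ}$, and hence $\beta_n^{\circ}$ is dense in $\beta_n$. Second, for $x\in\beta_n^{\circ}$ a relative neighborhood in $\beta_n$ is an honest neighborhood in $\H^n$, so $x\in\mathcal{P}^{\circ}$ if and only if $x\in\operatorname{int}_{\H^n}(\Tilde{\mathcal{P}}_n)$, and likewise $x\in(w\mathcal{P})^{\circ}$ if and only if $x\in\operatorname{int}_{\H^n}(w\Tilde{\mathcal{P}}_n)$, using $w\mathcal{P}=w\Tilde{\mathcal{P}}_n\cap\beta_n$. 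Then, if some point lay in $\mathcal{P}^{\circ}\cap(w\mathcal{P})^{\circ}$, this relatively open set would meet the dense subset $\beta_n^{\circ}$, yielding a point of $\operatorname{int}_{\H^n}(\Tilde{\mathcal{P}}_n)\cap\operatorname{int}_{\H^n}(w\Tilde{\mathcal{P}}_n)$; this intersection is empty because $\Tilde{\mathcal{P}}_n$ is a fundamental domain of $\H^n$ and $w\ne e$, a contradiction.

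The step I expect to be the main obstacle is the passage between the relative topology of $\beta_n$ and the topology of $\H^n$ along the boundary $\partial\beta_n$: one must argue that relative interiors can be detected on the dense open set $\beta_n^{\circ}$, where the two notions of interior agree. Once the density of $\beta_n^{\circ}$ (from convexity and full-dimensionality) and this local identification are in hand, the reflection-group tiling of $\Tilde{\mathcal{P}}_n$ supplies the contradiction immediately, and the faithfulness and closedness remarks complete the verification of Definition \ref{dom fund}.
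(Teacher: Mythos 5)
Your proposal is correct and follows essentially the same route as the paper: both verify the two conditions of Definition \ref{dom fund} by using the $W$-invariance of $\beta_n$ to restrict the tiling of $\H^n$ by $\Tilde{\mathcal{P}}_n$, and your covering argument is the paper's almost verbatim (for $x\in\beta_n$ write $x=wy$ with $y\in\Tilde{\mathcal{P}}_n$, and note $y=w^{-1}x\in\beta_n$, hence $y\in\mathcal{P}$). The only substantive difference is the disjointness step. The paper handles it in one line, $\mathcal{P}^{\circ}\cap(w\mathcal{P})^{\circ}\subset\Tilde{\mathcal{P}}^{\circ}\cap(w\Tilde{\mathcal{P}})^{\circ}=\emptyset$, which implicitly takes all interiors in the ambient space $\H^n$: monotonicity of the interior under inclusion holds there, but not when the left-hand interiors are computed relative to $\beta_n$, which is the strict reading of Definition \ref{dom fund} with $T=\beta_n$. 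Your density argument --- a nonempty relatively open subset of the closed, convex, full-dimensional set $\beta_n$ must meet $\beta_n^{\circ}$, where relative and ambient interiors can be identified --- is exactly what is needed to bridge that gap, so on this point your proof is, if anything, more careful than the paper's. Your remaining remarks (faithfulness of the restricted action, and $W$ permuting the $(-1)$-curve classes so that $w\beta_n=\beta_n$) are facts the paper asserts in Section 3 rather than reproving inside the lemma, so no objection there.
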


\begin{proof}
    Since $\mathcal{P}\subset\Tilde{\mathcal{P}}$, and $\Tilde{\mathcal{P}}$ is a fundamental domain of $\H^n$, we get $\mathcal{P}^{\circ}\cap w\mathcal{P}^{\circ}\subset \Tilde{\mathcal{P}}^{\circ}\cap w \Tilde{\mathcal{P}}^{\circ}=\emptyset$ for all nontrivial $w\in W$. Moreover, $W \mathcal{P}$ covers $\beta$, because for all $x\in\beta$ there is $w\in W$ and $y\in\Tilde{\mathcal{P}}$ such that $x=wy$. And since $W$ preserves $\beta$, then $y=w^{-1}x\in\beta$ and therefore $y\in\mathcal{P}$. %So then, $\mathcal{P}=\Tilde{\mathcal{P}}\cap\nef(S)$ is a fundamental domain of $\nef(S)$.
\end{proof} 

\begin{theorem}
\label{o dominio fundamental}
    The polytope $\mathcal{P}=\mathcal{P}_n=\Tilde{\mathcal{P}}_n\cap\beta_n$ in $\H^n$ is given by the $n+1$ inequalities $x_0\ge -x_1-x_2-x_3$ and $x_1\le x_2\le \dots\le x_n\le 0$. In other words, $\mathcal{P}=\Tilde{\mathcal{P}}\cap(x_n\le0)$.
\end{theorem}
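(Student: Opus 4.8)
The plan is to prove the asserted equality $\beta_n\cap\Tilde{\mathcal{P}}_n=\Tilde{\mathcal{P}}_n\cap\{x_n\le0\}$ by establishing the two inclusions separately, using throughout that for $x=(x_0,\dots,x_n)$ one has $x\cdot e_0=x_0$ and $x\cdot e_i=-x_i$ for $i\ge1$. The inclusion $\subseteq$ is immediate: the exceptional divisor $e_n$ is a $-1$-curve, so any $x\in\beta_n$ satisfies $x\cdot e_n=-x_n\ge0$, that is $x_n\le0$; since $\beta_n=\bigcap_{c}(\sqrt2c)^{\ge0}$ ranges over \emph{all} $-1$-curves, the single wall coming from $e_n$ already forces $x_n\le 0$.

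The substance is the reverse inclusion. Given $x\in\Tilde{\mathcal{P}}_n$ with $x_n\le0$ (hence $x_1\le\dots\le x_n\le0$), I must show $x\cdot c\ge0$ for every $-1$-curve $c$. First I would replace $c$ by the representative $c^+\in Wc$ lying in the closed fundamental chamber $\overline{\Tilde{\mathcal{P}}_n}$, characterized by $c^+\cdot v_i\ge0$ for all $i$, where $v_0=e_0-e_1-e_2-e_3$ and $v_i=e_i-e_{i+1}$. The point is that passing to $c^+$ can only decrease the pairing with $x$: since $x$ lies in the closed chamber, $x\cdot v_i\ge0$ for all $i$, and whenever a class $\mu$ satisfies $\mu\cdot v_i<0$ the wall reflection $w_i$ gives $w_i\mu=\mu+(\mu\cdot v_i)v_i$, so that $x\cdot(w_i\mu)=x\cdot\mu+(\mu\cdot v_i)(x\cdot v_i)\le x\cdot\mu$. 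Thus straightening $c$ toward the chamber never increases $x\cdot(\,\cdot\,)$, and it suffices to prove $x\cdot c^+\ge0$ for dominant $-1$-curves.

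To know that this straightening terminates, I would track the degree $d=c\cdot e_0$. Because $W$ sends classes of curves to classes of curves, every class arising during the process is an effective curve class, and since $e_0$ is nef we have $d\ge0$ throughout. The generators $w_1,\dots,w_{n-1}$ permute the multiplicities and fix $d$, so they merely bubble-sort $(m_1,\dots,m_n)$ into decreasing order in finitely many inversion-reducing steps; the generator $w_0$, applied exactly when $c\cdot v_0<0$, sends $d$ to $d+(c\cdot v_0)<d$, strictly dropping the nonnegative integer $d$. Hence $w_0$ is used only finitely often and the procedure reaches $c^+$ in finitely many steps.

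Finally I would verify $x\cdot c^+\ge0$ directly. The dominant $-1$-curves are of two kinds: the exceptional curve $e_n$, for which $x\cdot e_n=-x_n\ge0$; and positive-degree curves $c^+=de_0-\sum_i m_ie_i$ with $d\ge1$, $m_1\ge\dots\ge m_n\ge0$, and $c^+\cdot v_0=d-m_1-m_2-m_3\ge0$. Writing $a_i=-x_i\ge0$ (so $a_1\ge\dots\ge a_n\ge0$ and the chamber inequality $x\cdot v_0\ge0$ reads $x_0\ge a_1+a_2+a_3$), the pairing is $x\cdot c^+=dx_0-\sum_i m_ia_i$. Bounding $\sum_{i\ge4}m_ia_i\le a_3\sum_{i\ge4}m_i$ via $a_i\le a_3$, using $\sum_i m_i=3d-1$, and substituting $dx_0\ge d(a_1+a_2+a_3)$, I expect to collect
\[
x\cdot c^+\ge(d-m_1)a_1+(d-m_2)a_2+(m_1+m_2+1-2d)a_3 .
\]
Since $m_1,m_2\le d$ and $a_1,a_2\ge a_3$, replacing $a_1,a_2$ by $a_3$ only decreases the right-hand side, and the coefficients then sum to $1$, leaving $a_3\ge0$; thus $x\cdot c^+\ge a_3\ge0$ in all cases. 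The main obstacle I anticipate is exactly this last chain of estimates together with the termination argument: once one observes that $w_0$ strictly lowers the degree (so the straightening to a dominant representative halts) and identifies the two families of dominant $-1$-curves, the remaining difficulty is purely the arithmetic of the displayed inequality.
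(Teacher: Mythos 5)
Your proof is correct, but it takes a genuinely different route from the paper's. The paper makes no use of the group action at this point: fixing $x$ in $\Tilde{\mathcal{P}}\cap(x_n\le0)$, it handles degrees $0$ and $1$ directly and then, via the Claim following the proof (induction on $d$), writes each inequality $dx_0+\sum_l m_lx_l\ge0$ as a sum of $d-1$ inequalities $x_0+x_{i_1}+x_{i_2}+x_{i_3}\ge0$ and one inequality $x_0+x_{j_1}+x_{j_2}\ge0$, each valid on the region by sorting --- an explicit Farkas-type certificate of redundancy. You instead run the descent on the curve side, through the reflection group: your monotonicity identity $x\cdot(w_i\mu)=x\cdot\mu+(\mu\cdot v_i)(x\cdot v_i)\le x\cdot\mu$ is correct, your termination argument (the degree is a nonnegative integer, fixed by the sorting generators and strictly decreased by $w_0$) is valid given two facts stated in the paper, namely that $W$ sends curve classes to curve classes and that $e_0$ is nef, and your closing estimate $x\cdot c^+\ge(d-m_1)a_1+(d-m_2)a_2+(m_1+m_2+1-2d)a_3\ge a_3\ge0$ checks out, with dominance $m_1+m_2+m_3\le d$ neatly replacing the paper's input $m_l\le d$. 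Both proofs assume the same unproved numerical description of $-1$-classes ($m_i\in\Z_{\ge0}$, $3d-\sum_l m_l=1$). Two remarks: your descent is essentially the mechanism the paper itself uses later, in Section 6, to prove Theorem \ref{main theo n>9} for boundary nef classes, so your argument in effect unifies the two sections; and one can show, using $c^2=-1$ (a relation you never need), that there are no dominant $-1$-classes of positive degree at all, so your final case is vacuous --- checking it directly as you do is harmless and keeps the proof independent of that fact. What your approach buys is a conceptual, reusable chamber argument; what the paper's buys is a purely elementary certificate exhibiting each $-1$-curve inequality as a nonnegative combination of the $n+1$ defining ones.
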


\begin{proof}
It is easy to check that $\mathcal{P}\subset \Tilde{\mathcal{P}}\cap (x_n\le0)$, since $x_n\le0$ is the halfspace $(\sqrt{2}e_n)^{\ge0}$. To show that equality holds, we prove that $\Tilde{\mathcal{P}}\cap (x_n\le0)\subset\beta_n$. It is enough to show that any other inequality corresponding to a halfspace $(\sqrt{2}c)^{\ge0}$ for a $-1$-curve $c$ is redundant when added to the ones we have: $x_0+x_1+x_2+x_3\ge0$ and $x_1\le x_2\le \dots\le x_n\le0$. 

The proof is by induction on the degree $d$ of the $-1$-curve associated to the halfspace. Recall that the degree $d$ of a curve $c$ is the product $d=c\cdot e_0$. In other words, it is its first coordinate in $\R^{n+1}$. We first analyze the base cases when $d=0$ and $d=1$.

The $-1$-curves of degree $0$, that is, of the form $e_i$ with $i=1,\dots,n$, correspond to the halfspaces $x_i\le0$. Notice that the region $\Tilde{\mathcal{P}}\cap (x_n\le0)$ is already contained in this halfspace, because $x_1\le x_2\le \dots\le x_n\le0$.

The $-1$-curves of degree $1$, that is, of the form $e_0-e_i-e_j$ for distinct $i,j=1,\dots,n$, correspond to the halfspaces $x_0+x_i+x_j\ge0$. Assume without loss of generality that  $i<j$. Then in $\Tilde{\mathcal{P}}\cap (x_n\le0)$ we have $x_0\ge x_0$, $x_i\ge x_1$, $x_j\ge x_2$ and $0\ge x_3$. Adding up these inequalities, we get $x_0+x_i+x_j\ge x_0+x_1+x_2+x_3\ge0$. So, the inequality $x_0+x_i+x_j\ge0$ is redundant in $\Tilde{\mathcal{P}}\cap (x_n\le0)$.

Now that the base case is complete, take $c=(d,-m_1,\dots,-m_n)$ the class of a $-1$-curve of degree $d>1$. The values $d,m_1,\dots,m_n$ must be nonnegative integers, and $m_l\le d$ for all $l=1,\dots,n$. Moreover, $(-K_S)\cdot c=3d-\sum_{l=1}^n m_l=1$. This class of $-1$-curve corresponds to the halfspace $dx_0+\sum_{l=1}^nm_lx_l\ge0$. We would like to show that this inequality is a consequence of the previous ones $x_0+x_{i_1}+x_{i_2}+x_{i_3}\ge0$ and $x_0+x_{j_1}+x_{j_2}\ge0$. That fact is proved by the claim below.

\end{proof}

\begin{claim}
Let $d,m_1,\dots,m_n$ be nonnegative integers, such that $d\ge1$, $m_l\le d$ for all $l=1,\dots,n$ and $3d-\sum_{l=1}^n m_l=1$. Then, the inequality $dx_0+\sum_{l=1}^nm_lx_l\ge0$ is the sum of $d-1$ inequalities of the form $x_0+x_{i_1}+x_{i_2}+x_{i_3}\ge0$, $1\le i_1<i_2<i_3\le n$, and $1$ inequality of the form $x_0+x_{j_1}+x_{j_2}\ge0$, $1\le j_1<j_2\le n$.
\end{claim}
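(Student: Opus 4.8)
The plan is to prove the claim by induction on $d$, peeling off one triple at a time. Before starting, I would record the bookkeeping the decomposition must satisfy: the coefficient of $x_0$ on the right-hand side is $(d-1)\cdot 1 + 1 = d$, and the total multiplicity $\sum_{l}m_l = 3d-1$ equals $3(d-1)+2$, the number of index-slots supplied by $d-1$ triples and one pair. Thus the claim is really the combinatorial assertion that $(m_1,\dots,m_n)$ can be written as a sum of $d-1$ indicator vectors of $3$-element subsets and one indicator vector of a $2$-element subset of $\{1,\dots,n\}$. For the base case $d=1$, the hypotheses force $\sum_l m_l = 2$ with each $m_l\le 1$, so exactly two of the $m_l$ equal $1$ and the inequality is itself a single pair inequality $x_0+x_{j_1}+x_{j_2}\ge 0$, with no triples needed.

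For the inductive step ($d\ge 2$), I would peel off one triple: choose three distinct indices $i_1,i_2,i_3$ with each $m_{i_1},m_{i_2},m_{i_3}\ge 1$, subtract $1$ from each, and apply the inductive hypothesis to the resulting data $(d-1,m_1',\dots,m_n')$. One checks immediately that the normalization is preserved, namely $3(d-1)-\sum_l m_l' = 1$, and that every $m_l'\ge 0$. The only genuine constraint to maintain is $m_l'\le d-1$ for all $l$; this can fail precisely for an index with $m_l=d$, which must therefore be included in the chosen triple.

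The crux is to show a valid triple always exists, and this is where I expect the main obstacle to lie: one must simultaneously force every maximal index (those with $m_l=d$) into the triple while still having enough positive coordinates left to reach three distinct indices. Two counting observations, both flowing from $\sum_l m_l = 3d-1$, reconcile these demands. First, at most two indices can attain $m_l=d$, since three of them would give $\sum_l m_l\ge 3d$; so the mandatory indices number at most two. Second, since each $m_l\le d$ and the total is $3d-1>2d$ for $d\ge 2$, there must be at least three indices with $m_l\ge 1$. Hence I can take the (at most two) maximal indices together with enough further positive indices to assemble three distinct ones; subtracting $1$ from each keeps every $m_l'$ in the range $0\le m_l'\le d-1$. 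The inductive hypothesis then expresses $(d-1,m')$ as $d-2$ triples and one pair, and re-adding the peeled triple completes the decomposition.

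As a cross-check, the same claim can be packaged as the existence of a $0$–$1$ matrix with row sums $(3,\dots,3,2)$ (that is, $d-1$ threes and one two) and column sums $(m_1,\dots,m_n)$, where the $0$–$1$ condition encodes distinctness of the indices within each triple or pair. Existence follows from the Gale–Ryser theorem once one verifies the dominance inequalities $\sum_{j\le k}m_j^{\downarrow}\le \sum_i \min(r_i,k)$; for $k\ge 3$ the right side equals $3d-1=\sum_l m_l$ and the inequality is automatic, while for $k=1$ and $k=2$ the conditions reduce to $m_1^{\downarrow}\le d$ and $m_1^{\downarrow}+m_2^{\downarrow}\le 2d$, which are exactly the two counting observations above. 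I would present the inductive argument as the main proof, since it is elementary and self-contained, and mention the Gale–Ryser viewpoint only as confirmation.
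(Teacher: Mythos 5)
Your proof is correct and takes essentially the same approach as the paper: induction on $d$, peeling off one triple per step chosen so that every index with $m_l=d$ lands inside it, with both key counting facts (at least three positive $m_l$, at most two equal to $d$) extracted from $\sum_l m_l=3d-1$. The only difference is cosmetic --- the paper selects the three indices with the \emph{largest} $m_r$ and rules out a violation of $m'_l\le d-1$ by contradiction, whereas you explicitly force the (at most two) maximal indices into the triple and verify the bound directly.
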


\begin{proof}
If $d=1$, each $m_l\le1$ and there are exactly $2$ of them different from zero so that $3d-\sum_{l=1}^n m_l=1$. In this case, we already have that the inequality is of the form $x_0+x_{j_1}+x_{j_2}\ge0$. Assume by induction on $d$ that the claim is true whenever $1\le d<d_{max}$ for some $d_{max}>1$. Suppose that there are $m_1,\dots,m_n$ nonnegative integers such that $m_l\le d_{max}$ for all $l=1,\dots,n$ and $3d_{max}-\sum_{l=1}^n m_l=1$. In particular, $m_r>0$ for at least $3$ indices $r=i,j,k$ with largest $m_r$, otherwise $3d_{max}-1=m_i+m_j\le2d_{max}$ so $d_{max}\le1$. Therefore, the inequality $d_{max}x_0+\sum_{l=1}^nm_lx_l\ge0$ is the sum of $x_0+x_i+x_j+x_k\ge0$ and $d'x_0+\sum_{l=1}^nm'_lx_l$ with $d'=d_{max}-1$, $m'_r=m_r-1$ for $r=i,j,k$ and $m'_r=m_r$ otherwise.

On the other hand, the integers $d',m'_1,\dots,m'_n$ satisfy the same hypothesis of the claim: the sum $3d'-\sum_{l=1}^n m'_l$ remains the same and $m'_l\le d'$, otherwise there would be some $m'_r>d'=d_{max}-1$. Necessarily $m'_r=m_r=d_{max}$, which means that $r\neq i,j,k$, and $m_i,m_j,m_k\ge m_r=d_{max}$. However, we would have $1=3d_{max}-\sum_{l=1}^n m_l\le -d_{max}$, a contradiction. Therefore $d'x_0+\sum_{l=1}^nm'_lx_l$ is the sum of $d'-1=d_{max}-2$ inequalities $x_0+x_{i_1}+x_{i_2}+x_{i_3}\ge0$ and $1$ inequality $x_0+x_{j_1}+x_{j_2}\ge0$. Together with the inequality $x_0+x_i+x_j+x_k\ge0$ we are done.
\end{proof}

We conclude that the fundamental domain $\mathcal{P}$ for the action of $W$ in $\beta_n$ is a polytope given by the $n+1$ inequalities $x_0\ge -x_1-x_2-x_3$ and $x_1\le x_2\le \dots\le x_n\le0$. For $n<9$, $\mathcal{P}$ has finite volume simply because it is a subset of the finite volume polytope $\beta_n$. Much more interestingly, for $n=9$, we will see that $\mathcal{P}_9$ also has finite volume, even if $\beta_n$ does not. Moreover, the number of facets, is the minimum in order to have finite volume, so it is actually a simplex in $\H^9$, the simplest possible polytope. Even more, it is a Coxeter simplex, one among the only three Coxeter simplices that exist in $\H^9$.

\section{The case $n=9$}

Recall that the light cone is defined as $$\L=\L_n:=\{v\in\R^{1,n}\mid v^2\ge0\}\subset\R^{1,n}$$ and each point of $\H^n$ corresponds to exactly one line in the light cone, namely, the line connecting that point to the origin. In this sense, $$\H^n\cong\P(\L_n^{\circ})=\frac{\L_n^{\circ}}{\sim},$$ with $\L_n^{\circ}=\{v\in\R^{1,n}\mid v^2>0\}$ and the equivalence relation given by nonzero scalar multiplication. It is also a convention to interpret $\partial\H^n$ as the lines through the origin that are contained in $\partial\L_n=\{v\in\R^{1,n}\mid v^2=0\}$: $$\partial\H^n\cong\P(\partial\L_n)=\frac{\partial\L_n\backslash\{(0,\dots,0)\}}{\sim}.$$ Moreover the compactification of $\H^n$ is given by $$\overline{\H^n}=\H^n\cup\partial\H^n\cong\P(\L_n)=\frac{\L_n\backslash\{(0,\dots,0)\}}{\sim}.$$ 

We use coordinates $(x_0,x_1,\dots,x_n)$ to describe a vector in $\R^{1,n}$, and $(x_0:x_1:\dots:x_n)$ to describe the point in $\overline{\H^n}$ that corresponds to the line through the origin and the point $(x_0,x_1,\dots,x_n)\in\L_n\backslash\{(0,0,\dots,0)\}$.

\begin{prop}
\label{poliedral}
    The polytope $\mathcal{P}_9\subset\H^9$ given by the $10$ inequalities $x_0\ge -x_1-x_2-x_3$ and $x_1\le x_2\le \dots\le x_9\le0$ has finite volume. That is, its closure in $\overline{\H^9}$ contains finitely many points on the boundary $\partial\H^9$, precisely $2$, which are $(1:-1:0:0:0:0:0:0:0:0)$ and $\ (3:-1:-1:-1:-1:-1:-1:-1:-1:-1)$.
\end{prop}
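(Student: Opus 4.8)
The plan is to exhibit all ten vertices of $\mathcal{P}_9$ explicitly and to read off their position relative to the light cone. The ten defining halfspaces have normal vectors $u_0=e_0-e_1-e_2-e_3$ (the inequality $x_0+x_1+x_2+x_3\ge0$), $u_i=e_i-e_{i+1}$ for $i=1,\dots,8$ (the inequalities $x_i\le x_{i+1}$), and $u_9=e_9$ (the inequality $x_9\le0$). First I would check that $u_0,\dots,u_9$ are linearly independent: the telescoping vectors $u_1,\dots,u_9$ already span $\langle e_1,\dots,e_9\rangle$, and $u_0$ is the only one involving $e_0$. Since the Minkowski form is nondegenerate, the cone $C=\{x\in\R^{1,9}\mid x\cdot u_i\ge0\ \forall i\}$ is therefore simplicial, generated by the ten rays $r_0,\dots,r_9$ dual to the $u_i$, i.e. characterized by $r_j\cdot u_i=0$ for $i\neq j$ and $r_j\cdot u_j>0$. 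These $r_j$ are precisely the candidate vertices of $\mathcal{P}_9$, the point opposite the $j$-th facet being the one that saturates the other nine inequalities.

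Next I would solve for each $r_j$ by dropping the $j$-th hyperplane and saturating the other nine. These are short back-substitutions: saturating the chain $x_1\le\dots\le x_9$ (possibly together with $x_9\le0$) collapses the relevant coordinates to a common value $t$ or to $0$, and the equation $x_0+x_1+x_2+x_3=0$ then pins down $x_0$. Up to positive rescaling this yields
\[
r_0=(1,0,\dots,0),\qquad r_1=(1,-1,0,\dots,0),\qquad r_2=(2,-1,-1,0,\dots,0),
\]
together with $r_k=(3,\underbrace{-1,\dots,-1}_{k},0,\dots,0)$ for $3\le k\le9$; in particular $r_9=-K_S$. Checking that each $r_j$ also satisfies the single inequality it does not saturate (e.g. $-1\le0$) confirms that these really are vertices and that $\mathcal{P}_9$ is nonempty.

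I would then compute the self-intersections $r_0^2=1$, $r_1^2=0$, $r_2^2=2$, and $r_k^2=9-k$ for $3\le k\le9$, so that $r_9^2=0$. Hence every vertex lies in the closed light cone $\L_9$, and exactly two of them, $r_1$ and $r_9$, lie on $\partial\L_9$; these give the two boundary points $(1:-1:0:\dots:0)$ and $(3:-1:\dots:-1)$. To conclude finite volume, I would use that the forward light cone $\{v\in\L_9\mid v\cdot e_0\ge0\}$ is convex and contains every $r_j$ (each has positive first coordinate), hence contains the whole simplicial cone $C$. Therefore the closure of $\mathcal{P}_9$ in $\overline{\H^9}$ is $\P(C)$, the convex hull of the ten points $[r_0],\dots,[r_9]$, which by Definition \ref{simplex} is a finite-volume polytope meeting $\partial\H^9$ in precisely $\{[r_1],[r_9]\}$.

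The step I expect to matter most is the final convexity argument rather than the vertex bookkeeping. The delicate point is that listing vertices with $r_j^2\ge0$ does not on its own guarantee finite volume: one must know that the cone $C$ does not escape the light cone between its extremal rays. Simpliciality (from linear independence of the $u_i$) guarantees that $r_0,\dots,r_9$ are all the extremal rays, and convexity of the forward light cone then forces $C\subseteq\L_9$; together these upgrade the vertex computation into the statement that $\overline{\mathcal{P}_9}$ is a genuine simplex with exactly two ideal vertices.
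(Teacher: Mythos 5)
Your proposal is correct, but it follows a genuinely different route from the paper's proof. The paper never computes the extremal rays of the cone: it proves $\mathcal{C}\subset\L$ directly from the defining inequalities, writing $x_0^2\ge(x_1+x_2+x_3)^2$ and then absorbing each cross term into two squares ($2x_1x_3\ge x_4^2+x_5^2$, $2x_2x_3\ge x_6^2+x_7^2$, $2x_1x_2\ge x_8^2+x_9^2$, valid since $|x_1|\ge|x_2|\ge\dots\ge|x_9|$), and it then locates the boundary points by analyzing the equality case. You instead dualize: linear independence of the ten normals makes the cone simplicial, you solve for its ten extremal rays, and you invoke convexity of the forward light cone (reverse Cauchy--Schwarz for future-directed causal vectors) to conclude $C\subset\L_9$. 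Your route buys something the paper's does not: it proves the explicit vertex list, which the paper asserts separately and without proof immediately after the proposition; the paper's route is more elementary, using only coordinate manipulation. One step you should make explicit, though: you need that $\P(C)$ meets $\partial\H^9$ in \emph{only} the two ideal vertices, not merely that only two vertices are ideal. This is immediate from the same Minkowski-space toolkit: if $u,v\in C$ are forward causal and $(u+v)^2=0$, then $u^2=v^2=u\cdot v=0$, which forces $u$ and $v$ proportional; hence every null vector of $C$ spans an extremal ray, and the null extremal rays are exactly $r_1$ and $r_9$. (Equivalently, in the affine chart $x_0=1$ the ball $\overline{\H^9}$ is strictly convex, so it can touch your simplex only at extreme points.) As written you assert the conclusion rather than derive it, but the fix is one line, so this is a presentational rather than a mathematical gap.
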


\begin{proof}
    The set $\mathcal{P}_9$ is the intersection of the cone $\mathcal{C}\subset\R^{1,9}$ given by the inequalities $x_0\ge -x_1-x_2-x_3$ and $x_1\le x_2\le \dots\le x_9\le0$ with the hyperboloid sheet $$\H^9:=\{v\in\R^{1,9}\mid v^2=1,\ v\cdot e_0>0\}\subset\R^{1,9}.$$ By Definition \ref{simplex}, $\mathcal{P}_9$ has finite volume if and only if the cone $\mathcal{C}$ is contained in the light cone $$\L_n(S):=\{v\in\NS(S)\mid v^2\ge0\}\subset\NS(S).$$

For any point $v=(x_0,\dots,x_9)\in\mathcal{C}$ we have $x_0\ge -x_1-x_2-x_3\ge 0$, which implies that 
\begin{align*}
    x_0^2\ge &\ x_1^2+x_2^2+x_3^2+2x_1x_3+2x_2x_3+2x_1x_2\\ \ge &\ x_1^2+x_2^2+x_3^2+x_4^2+x_5^2+x_6^2+x_7^2+x_8^2+x_9^2
\end{align*}
because of the inequalities 
\begin{align*}
    x_1x_3\ge & \ x_4^2,\ x_5^2\\
    x_2x_3\ge & \ x_6^2,\ x_7^2\\
    x_1x_2\ge & \ x_8^2,\ x_9^2.
\end{align*} It follows that $v^2=x_0^2-\sum_{i=1}^9x_i^2\ge 0$, and so $\mathcal{C}\subset\mathcal{L}$.  

To find the points of $\mathcal{P}_9$ on the boundary of $\H^9$, we need to verify when the equality $v^2=0$ holds. It is necessary that all inequalities above are equalities. In particular, $x_1x_2=x_9^2$ only if one of the following two conditions holds: $x_2=x_9=0$, or $x_1=x_2=x_9$. It follows that the only $2$ points of $\overline{\mathcal{P}_9}$ on the boundary $\partial\H^9$ are $(1:-1:0:0:0:0:0:0:0:0)$ and $ (3:-1:-1:-1:-1:-1:-1:-1:-1:-1).$
\end{proof}

As a consequence, $\mathcal{P}_9$ is a simplex with the following $10$ vertices:
\begin{align*}
(1:0:0:0:0:0:0:0:0:0),&\\(2:-1:-1:0:0:0:0:0:0:0),&\\
(3:-1:-1:-1:0:0:0:0:0:0),&\\
(3:-1:-1:-1:-1:0:0:0:0:0),&\\(3:-1:-1:-1:-1:-1:0:0:0:0),&\\(3:-1:-1:-1:-1:-1:-1:0:0:0), &\\(3:-1:-1:-1:-1:-1:-1:-1:0:0),&\\
(3:-1:-1:-1:-1:-1:-1:-1:-1:0)&\in\H^9,\\
(1:-1:0:0:0:0:0:0:0:0),&\\(3:-1:-1:-1:-1:-1:-1:-1:-1:-1)&\in\partial\H^9,
\end{align*}
each of which is the unique intersection of $9$ hyperplanes delimiting $\mathcal{P}_9$.

Geometrically, each of these vertices corresponds to a ray in $\R^{1,9}$ spanned  by the class of a curve. These are the strict transforms of: a line that does not pass through any of the $9$ points $P_i$, a line passing through only $P_1$, a conic passing through $P_1$ and $P_2$, and cubics passing through $P_1$, $P_2,\ \dots, P_i$ only
for any $i=3,\dots,9$. Then we are ready to prove Theorem \ref{main theo n=9 part 1}, rephrased as follows:

\begin{theorem}
\label{main theo n=9 rephrased}
The nef cone of $\P^2_9$ admits a rational polyhedral fundamental cone with respect to the Cremona action of $W$. Namely, the cone $\mathcal{C}$ in $\R^{1,9}$ given by the inequalities $x_0\ge -x_1-x_2-x_3$ and $x_1\le x_2\le \dots\le x_9\le0$, or, in other words, the closed convex cone generated by the $10$ classes of curves mentioned above.
\end{theorem}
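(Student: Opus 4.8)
The plan is to deduce the statement from the hyperbolic-geometric results already assembled, converting the fundamental domain $\mathcal{P}_9\subset\H^9$ of $\beta_9$ into a fundamental cone $\mathcal{C}\subset\R^{1,9}$ of $\Nef(S)$ by \textquote{coning up}. First I would record the identification $\beta_9=\Nef(S)\cap\H^9$. Since Conjecture \ref{conj-1} holds for $n=9$ (as recalled in Section 3), the polytope $\beta_9=\bigcap_{c}(\sqrt2 c)^{\ge0}$ equals $\Nef(S)\cap\H^9$, and moreover $\Nef(S_9)=\Nef^\text{e}(S_9)$, so no effectivity is lost. The group $W$ preserves $\Nef(S)$, the intersection form, and $\H^9$, hence acts on both $\Nef(S)$ and $\beta_9$ compatibly with the central projection of the open light cone onto $\H^9$.

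Next I would combine the theorem asserting that $\Tilde{\mathcal{P}}_9$ is a fundamental domain of $\H^9$ for $W$ with Lemma \ref{lema}: this gives that $\mathcal{P}_9=\Tilde{\mathcal{P}}_9\cap\beta_9$ is a fundamental domain of $\beta_9$ for $W$, and by Theorem \ref{o dominio fundamental} it is cut out by exactly the $10$ inequalities defining $\mathcal{C}$. Thus $\mathcal{C}\cap\H^9=\mathcal{P}_9$ and $\mathcal{C}$ is precisely the cone over $\mathcal{P}_9$. By Proposition \ref{poliedral}, $\mathcal{P}_9$ has finite volume, its closure in $\overline{\H^9}$ being the convex hull of the $10$ listed vertices (eight interior, two on $\partial\H^9$). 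Invoking the equivalence \textquote{$\mathcal{C}$ polyhedral $\iff$ $\mathcal{P}_9$ has finite volume} noted after the definition of $\beta_n$, the cone $\mathcal{C}$ is polyhedral; concretely it is the convex cone generated by the $10$ rays through those $10$ vertices. Each vertex has integer coordinates and is the class of an actual curve, so every generating ray is rational and $\mathcal{C}$ is rational polyhedral.

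It then remains to verify the two defining properties of a fundamental domain, $\Nef(S)=\bigcup_{w\in W}w\mathcal{C}$ and $\mathcal{C}^{\circ}\cap(w\mathcal{C})^{\circ}=\emptyset$ for $w\ne e$, by transporting them across the central projection. For covering, a class $v\in\Nef(S)$ with $v^2>0$ projects to a point $\bar v\in\beta_9$; since $\mathcal{P}_9$ is a fundamental domain of $\beta_9$ there is $w$ with $\bar v\in w\mathcal{P}_9$, whence $v\in w\mathcal{C}$. For interior disjointness, I would use that the projection carries $\mathcal{C}^{\circ}\cap\L^{\circ}$ onto the relative interior $\mathcal{P}_9^{\circ}$, so that a nontrivial overlap $\mathcal{C}^{\circ}\cap(w\mathcal{C})^{\circ}$ would project to $\mathcal{P}_9^{\circ}\cap(w\mathcal{P}_9)^{\circ}$, which is empty by Lemma \ref{lema}.

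The step I expect to demand the most care is the boundary behaviour in this coning argument: the projectivization $\H^9\cong\P(\L^{\circ})$ only sees the interior of the light cone, so the rays of $\Nef(S)$ lying on $\partial\L$ — equivalently the $W$-translates of the two cusp vertices $(1:-1:0:\dots:0)$ and $-K_S=(3:-1:\dots:-1)$ — must be handled separately from the interior rays. The saving feature is that for $n=9$ these boundary rays are rational (since $\Nef(S_9)=\Nef^\text{e}(S_9)$) and appear as genuine extremal rays of the closed cone $\mathcal{C}$, so a closure argument on the already-established covering of the interior, together with the finite-volume conclusion of Proposition \ref{poliedral}, closes the gap without any orbit-accumulation pathology. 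This yields that $\mathcal{C}$ is a rational polyhedral fundamental cone for $\Nef(S)$ with respect to the Cremona action, proving the theorem.
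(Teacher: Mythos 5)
Your proposal is correct and takes essentially the same route as the paper: cone up the hyperbolic fundamental domain $\mathcal{P}_9$ via Lemma \ref{lema}, Theorem \ref{o dominio fundamental} and Proposition \ref{poliedral}, then treat the rays on $\partial\L_9$ separately by identifying the nef classes there with the $W$-orbits of $(1,-1,0,\dots,0)$ and $-K_S$. The paper finishes exactly this way, citing that orbit description directly rather than a \textquote{closure argument} --- which is the cleaner formulation, since $\bigcup_{w\in W}w\mathcal{C}$ is not obviously closed (the translated rays accumulate at $-K_S$), so the explicit description of the boundary nef classes is what genuinely closes the argument.
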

\begin{proof}
Since $W$ acts linearly on $\NS(S)$, by Lemma \ref{lema} and Theorem \ref{o dominio fundamental}, $\mathcal{C}\cap\L_9^{\circ}$ is a fundamental domain for $\Nef(S)\cap\L_9^{\circ}$. It remains to check that the nef classes on the boundary $\partial\L_9$ are in the orbit of those in $\mathcal{C}\cap\partial\L_9$. 

By Proposition \ref{poliedral}, the rays on $\mathcal{C}\cap\partial\L_9$ are the ones spanned  by the vectors $(1,-1,0,0,0,0,0,0,0,0)$ and $(3,-1,-1,-1,-1,-1,-1,-1,-1,-1)=-K_S$. These vectors are both nef, so $\mathcal{C}\subset\Nef(S)\implies\bigcup_{w\in W}w\mathcal{C}\subset\Nef(S)$.

Moreover, recall that the nef elements in $\partial\L_9$ are spanned by the elements in the orbit 
$W(1,-1,0,0,0,0,0,0,0,0)$ and $-K_S$. Therefore, $$\bigcup_{w\in W}w\mathcal{C}=\Nef(S).$$
\end{proof}

The Cartan Matrix $C_{\mathcal{P}_9}=[a_{ij}=-v_i\cdot v_j]$ reveals more. To calculate it, we recall that the normal vectors associated to the halfspaces are $v_0=e_0-e_1-e_2-e_3$, $v_i=e_{i}-e_{i+1}$ for $i=1,\dots,8$ and  $v_9=\sqrt{2}e_9$. Here it is: 

\begin{equation*}
C_{\mathcal{P}_9}=\begin{pmatrix}
2 & 0 & 0 & -1 & 0 &  0 & 0 & 0 & 0 & 0\\
0 & 2 & -1 & 0 & 0 &  0 & 0 & 0 & 0 & 0\\
0 & -1 & 2 & -1 & 0 &  0 & 0 & 0 & 0 & 0\\
-1 & 0 & -1 & 2 & -1 &  0 & 0 & 0 & 0 & 0\\
0 & 0 & 0 & -1 & 2 & -1 & 0 & 0 & 0 & 0\\
0 & 0 & 0 & 0 & -1 &  2 & -1 & 0 & 0 & 0\\
 0 & 0 & 0 & 0 & 0 &  -1 & 2 & -1 & 0 & 0 \\
0 & 0 & 0 & 0 & 0 & 0 & -1 & 2 & -1 & 0 \\
0 & 0 & 0 & 0 & 0  & 0 & 0  & -1 & 2 & -\sqrt{2} \\
0 & 0 & 0 & 0 & 0 & 0 & 0 & 0 & -\sqrt{2} & 2  \\
\end{pmatrix}.
\end{equation*}

The entries $2$, $0$, $-1$ e $-\sqrt{2}$ indicate that the corresponding pairs of hyperplanes make an angle of $\pi$, $\frac\pi2$, $\frac\pi3$ and $\frac\pi4$, respectively. Therefore, besides being a fundamental domain of $\beta_n$, we can obtain Theorem 2: this simplex is one of the three hyperbolic Coxeter 9-simplices up to congruence, according to \cite{GeometryII} part II chapter $5$ section 2.3 Table $4$. Its Coxeter diagram is 

\begin{center}
\usetikzlibrary{positioning}
\begin{tikzpicture}
  [scale=.8,auto=left,every node/.style={circle,draw=black,fill=none}]
  
  \node (n1) at (1,10) {};
  \node (n2) at (2,10) {};
  \node (n3) at (3,10) {};
  \node (n4) at (4,10) {};
  \node (n5) at (5,10) {};
  \node (n6) at (6,10) {};
  \node (n7) at (7,10) {};
  \node (n8) at (8,10) {};
  \node (n9) at (9,10) {};
  \node (n10) at (3,11) {};  % Novo vértice

  \foreach \from/\to in {n1/n2,n2/n3,n3/n4,n4/n5,n5/n6,n6/n7,n7/n8,n3/n10}
    \draw (\from) -- (\to);
    
  % Aresta dupla entre os vértices 8 e 9
  \draw ([yshift=-2pt]n8.east) -- ([yshift=-2pt]n9.west);
  \draw ([yshift=2pt]n8.east) -- ([yshift=2pt]n9.west);

\end{tikzpicture}.
\end{center}

Coxeter simplices are exactly the simplices that can tile the hyperbolic space by reflections. Not only $\mathcal{P}_9$ is a fundamental domain of $\beta_n$ with respect to the action of $W$, but also, if we add as a generator the reflection $r_9$ with respect to the hyperplane $(x_9=0)$, then $\mathcal{P}_9$ is a fundamental domain of $\H^9$ with respect to the action of $\left<W,r_9\right>$, which is also discrete. This completes the proof of Theorem \ref{main theo n=9 part 2}.

In dimension $n\ge10$, there are no Coxeter simplices in $\H^n$. In other words, if a polytope is a fundamental domain for the action of $W$ on the hyperbolic space of dimension $n\ge10$, then it is not a simplex. As we will see in the next section, the polytope we choose to be a fundamental domain for $\beta_n$ is no longer a simplex.

\section {The case $n\ge10$}

For $n\ge10$, the intersection of $\overline{\mathcal{P}}=\overline{\mathcal{P}_n}$ with the boundary $\partial\H^n$ is infinite, and therefore $\mathcal{P}_n$ has infinite volume. Recall that the shape of the nef cone is different on the two sides of the hyperplane $\{u\in\R^{1,n}\mid u\cdot K_S=0\}$ in $\R^{1,n}$. On the negative side of the nef cone, which is the only side on which we hope to find a polyhedral fundamental cone, we observe that $$\Nef^\text{e}(S_n)_{K_{S_n}\le0}\cap\H^n=\beta\cap\left(-\sqrt{\frac{2}{n-9}}K_S\right)^{\ge0},$$ by Remark \ref{lemma4.1_Tommaso}.

It is clear that the polytope $$\mathcal{P}^-_n:=\mathcal{P}_n\cap\left(-\sqrt{\frac{2}{n-9}}K_S\right)^{\ge0}$$ is a fundamental domain of $\beta\cap\left(-\sqrt{\frac{2}{n-9}}K_S\right)^{\ge0}$ with respect to the action of $W$. The reason is the same as in Lemma \ref{lema}, since $\mathcal{P}_n$ is a fundamental domain of $\beta_n$ with respect to the action of $W$ and the polytope $\beta\cap\left(-\sqrt{\frac{2}{n-9}}K_S\right)^{\ge0}$ is preserved by $W$.

%For dimension $n\ge10$ there are no Coxeter simplices. In other words, if a polytope covers the hyperbolic space of dimension $n\ge10$, then it is not a simplex.

\begin{prop}
\label{caso grande}
    For $n\ge10$, the polytope $\mathcal{P}^-_n$ has finite volume. Its closure in $\overline{\H^n}$ has exactly $2$ points on the boundary $\partial{\H^n}$: $(1:-1:0:\dots:0)$ and $(3:\underbrace{-1:\dots:-1}_{9\text{ coordinates }-1}:0:\dots:0)$.
\end{prop}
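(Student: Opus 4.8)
The plan is to run the same reduction used in Proposition \ref{poliedral}: write $\mathcal{P}^-_n=\mathcal{C}^-\cap\H^n$, where $\mathcal{C}^-\subset\R^{1,n}$ is the polyhedral cone cut out by the defining inequalities together with the new facet, and recall that $\mathcal{P}^-_n$ has finite volume exactly when $\mathcal{C}^-\subset\L$. Thus everything reduces to proving $v^2\ge0$ for all $v\in\mathcal{C}^-$ and determining which rays satisfy $v^2=0$. To set up coordinates I would substitute $a_i=-x_i\ge0$, so that $x_1\le\dots\le x_n\le0$ becomes $a_1\ge a_2\ge\dots\ge a_n\ge0$, the facet $x_0\ge-x_1-x_2-x_3$ becomes $x_0\ge a_1+a_2+a_3$, and the new facet $\bigl(-\sqrt{2/(n-9)}\,K_S\bigr)^{\ge0}$ — whose normal is a positive multiple of $(3,-1,\dots,-1)$ — becomes $3x_0\ge a_1+\dots+a_n$. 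The target inequality is then $x_0^2\ge\sum_{i=1}^n a_i^2$.

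The heart of the argument is to split the sum of squares into a head $a_1^2+a_2^2+a_3^2$, controlled by the facet $x_0\ge a_1+a_2+a_3$, and a tail $\sum_{i\ge4}a_i^2$, controlled by the new facet. For the tail, since $a_i\le a_4$ for $i\ge4$ and $\sum_{i\ge4}a_i\le 3x_0-(a_1+a_2+a_3)$ by the new facet, one gets $\sum_{i\ge4}a_i^2\le a_4\sum_{i\ge4}a_i\le a_4\bigl(3x_0-a_1-a_2-a_3\bigr)$. It then suffices to establish
\[
x_0^2\ge a_1^2+a_2^2+a_3^2+a_4\bigl(3x_0-a_1-a_2-a_3\bigr).
\]
Reading the difference of the two sides as an upward parabola in $x_0$ with vertex at $\tfrac32 a_4$, and noting $x_0\ge a_1+a_2+a_3\ge 3a_4>\tfrac32 a_4$, the parabola is increasing on the feasible range, so it is enough to check it at $x_0=a_1+a_2+a_3$. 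There it collapses to $a_1a_2+a_1a_3+a_2a_3\ge a_4(a_1+a_2+a_3)$, which follows at once from $a_1a_2\ge a_3^2$ and $a_3\ge a_4$. This proves $\mathcal{C}^-\subset\L$ and hence finite volume.

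To locate $\partial\H^n\cap\overline{\mathcal{P}^-_n}$ I would trace equality through the whole chain, since $v^2=0$ forces every inequality above to be tight. Equality in the parabola forces $x_0=a_1+a_2+a_3$ together with $a_1a_2=a_3^2$ and $a_3=a_4$; the relation $a_1a_2=a_3^2$ with $a_1\ge a_2\ge a_3$ splits into $a_1=a_2=a_3$ or $a_2=a_3=0$. In the degenerate branch $a_3=a_4=0$ and $a_1a_2=0$ force $a_2=\dots=a_n=0$ and $x_0=a_1$, giving the ray $(1:-1:0:\dots:0)$ (for which the new facet is automatically slack). In the other branch $a_1=a_2=a_3=a_4$, the tail equality $\sum_{i\ge4}a_i^2=a_4\sum_{i\ge4}a_i$ forces each $a_i$ with $i\ge4$ to equal $0$ or $a_4$, and $a_4>0$ forces the new facet $3x_0=\sum a_i$ to be tight; combining $x_0=3a_4$ with $\sum a_i=9a_4$ yields exactly nine nonzero entries, i.e. the ray $(3:-1:\dots:-1:0:\dots:0)$ with nine $-1$'s. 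These are the only two, which is the assertion.

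The main obstacle is precisely the quadratic inequality in the second paragraph: neither facet suffices on its own. The facet $x_0\ge a_1+a_2+a_3$ alone controls only the first six tail squares, exactly as in the $n=9$ computation of Proposition \ref{poliedral}, while the facet $3x_0\ge\sum a_i$ alone yields only the lossy $9x_0^2\ge(\sum a_i)^2\ge\sum a_i^2$. The crux is to combine them correctly — bounding the tail squares by $a_4$ times the tail sum and feeding this into a single parabola in $x_0$ whose vertex lies below the feasible region — and then to verify that the resulting equality locus is exactly the two expected rays rather than a larger family.
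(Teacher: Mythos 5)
Your proof is correct, and it takes a genuinely different route from the paper's. The paper proves Proposition \ref{caso grande} by induction on $n$, with Proposition \ref{poliedral} as base case: a point with $x_n=0$ is projected to $\R^{1,n-1}$ and handled by the inductive hypothesis, while a point with $x_n<0$ is treated by a perturbation argument (adding small multiples of $e_n-e_4$, then of $e_2-e_1$, to collapse coordinates) that reduces to the case $x_2=x_3=\dots=x_{n-1}$; after rescaling to $x_0=1$, the proof concludes by maximizing the convex function $f(x_1,x_2,x_n)=x_1^2+(n-2)x_2^2+x_n^2$ over a three-dimensional polytope $R$, enumerating its candidate vertices in a table and checking that all values are at most $1$, with strict inequality when $x_n<0$. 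Your argument is instead direct and uniform in $n$: the head/tail split $\sum_i a_i^2=(a_1^2+a_2^2+a_3^2)+\sum_{i\ge4}a_i^2$, the bound $\sum_{i\ge4}a_i^2\le a_4\bigl(3x_0-a_1-a_2-a_3\bigr)$ obtained by combining the ordering with the $K_S$-facet, and a one-variable parabola whose minimum over the feasible range is at $x_0=a_1+a_2+a_3$, where everything reduces to $a_1a_2+a_1a_3+a_2a_3\ge a_4(a_1+a_2+a_3)$, a term-by-term consequence of $a_1\ge a_2\ge a_3\ge a_4\ge0$; the equality locus then propagates through this same chain and is forced to be exactly the two claimed rays. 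What your route buys: no induction, no perturbation step (the most delicate part of the paper's proof, since one must verify the perturbation stays in the cone and strictly decreases $v^2$), no vertex table, and an equality analysis that falls out of the inequalities rather than being checked case by case. What the paper's route buys: it recycles the $n=9$ computation, and its reduction to the region $R$ and vertex enumeration produce data that reappear in the explicit list of vertices of $\mathcal{P}^-_n$ displayed right after the proposition. One sentence worth adding to your write-up: the strict inequality $3a_4>\tfrac32 a_4$ you invoke for monotonicity requires $a_4>0$; when $a_4=0$ the tail vanishes, the parabola has vertex at $0$, and the conclusion still holds since any nonzero $v$ in the light cone with your constraints has $x_0>0$ --- a one-line remark, not a gap.
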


\begin{proof}
We must prove that the cone $\mathcal{C}_n$ in $\R^{1,n}$ given by the inequalities $x_0\ge -x_1-x_2-x_3$, $x_1\le x_2\le\dots\le x_n\le0$ and $3x_0\ge-\sum_{i=1}^{n}x_i$ is contained in the light cone $\L_n$. The proof is by induction on $n$ with base case $n=9$. When $n=9$, the last inequality is derived from the others, so $\mathcal{P}^-_9=\mathcal{P}_9$, and, by Proposition \ref{poliedral}, we know that it has finite volume and the exact $2$ mentioned points on $\partial\H^9$. 

Now suppose that $\mathcal{C}_{n-1}\subset\L_{n-1}$ for some $n\ge10$, and the rays spanned  by $(1,-1,0,\dots,0)$, $(3,\underbrace{-1,\dots,-1}_{9\text{ coordinates }-1},0,\dots,0)\in\R^{1,n}$ are the only rays of $\mathcal{C}_{n-1}$ on the boundary $\partial\L_{n-1}$. We will prove that the same is true for $n$. 

Let $v=(x_0,x_1,\dots,x_{n})\in\mathcal{C}_n$. If $x_{n}=0$, then the projection of $v$ in $\R^{1,n-1}$ given by $\overline{v}=(x_0, x_1, \dots, x_{n-1}) \in \R^{1,n-1}$ satisfies the defining inequalities of $\mathcal{C}_{n-1}.$ By induction, $\overline{v}\in\L_{n-1}$ and \begin{equation}\label{igualdade}x_0^2\ge\sum_{i=1}^{n-1}x_i^2=\sum_{i=1}^{n}x_i^2\end{equation} so $v\in\L_n$. Moreover, by induction, equality holds in (\ref{igualdade}). This is equivalent to $\overline{v}=\lambda(1,-1,0,\dots,0)$ or $\overline{v}=\lambda(3,\underbrace{-1,\dots,-1}_{9\text{ coordinates }-1},0,\dots,0)$ for some $\lambda>0$, if and only if $v=\lambda(1,-1,0,\dots,0)$ or $v=\lambda(3,\underbrace{-1,\dots,-1}_{9\text{ coordinates }-1},0,\dots,0)$ for some $\lambda>0$.

It remains to prove that if $x_{n}<0$, then $x_0^2-\sum_{i=1}^{n}x_i^2>0$. We may assume that $x_3=x_4$, otherwise we consider a small perturbation $v'=v+\epsilon(e_n-e_4)$ of $v$. We still have $v'=(x_0',x_1',\dots,x_n')\in\mathcal{C}_n$, but the value of $x_0^2-\sum_{i=1}^{n}x_i^2$ strictly decreases. We can increase $\epsilon$ until $x_{n}'=0$, and we fall on the previous case, or until $x_4'=x_3'$. 

Repeating the argument, we may assume that $x_4=x_5$, and so on. So we can suppose $x_3=x_4=\dots=x_{n-1}$ while $x_{n}<0$. The same argument shows that we may also assume that $x_2=x_3$, otherwise we consider perturbation $v'=v+\epsilon(e_2-e_1)$ of $v$.

We can rescale $v$ so that $x_0=1$, since $x_0>0$. This way we want to prove that $x_1^2+(n-2)x_2^2+x_{n}^2<1$ whenever we have $x_1+2x_2\ge-1$, $x_1\le x_2\le x_{n}<0$ and $x_1+(n-2)x_2+x_{n}\ge-3$. Let's define the region in $\R^3$ given by
\begin{equation}
    R=\left\{(x_1,x_2,x_n)\in\R^3;
    \begin{split}
        x_1+2x_2\ge-1,\\ x_1\le x_2\le x_{n}\le0,\\ x_1+(n-2)x_2+x_{n}\ge-3
    \end{split} 
    \right\}.
\end{equation}
We want to determine the maximal value of the convex function $f:R\to\R$ given by $f(x_1,x_2,x_n)=x_1^2+(n-2)x_2^2+x_{n}^2$. We know it must be achieved on some vertex of $R$.

Any $3$ of the $5$ facets of $R$ intersect in a single point. If this point verifies the other two inequalities, then it is a vertex of $R$, otherwise it is outside $R$. We check this for each such point, as well as the corresponding value of $f(x_1,x_2,x_n)$ when it is a vertex of $R$ in the table below.

\begin{center}
\begin{tabular}{|c c c|}

 \hline
 $(x_1,x_2,x_n)$ & is it a vertex of $R$? & $f(x_1,x_2,x_n)$ \\ [1ex] 
 \hline\hline
 $(0,0,0)$ & yes & $0$ \\[1ex]  
 \hline
 $(-1,0,0)$ & yes & $1$ \\[1ex] 
 \hline
 $(-3,0,0)$ & no &  \\[1ex] 
 \hline
 $\left(-\frac13,-\frac13,0\right)$ & $\text{yes}\iff n=10$ & $1$ \ if $n=10$ \\[1ex] 
 \hline
 $\left(-\frac{3}{n-1},-\frac{3}{n-1},0\right)$ & yes & $\frac{9}{n-1}$ \\[1ex] 
 \hline
 $\left(-\frac13,-\frac13,-\frac13\right) $ & no & \\ [1ex] 
 \hline
 $\left(-\frac3n,-\frac3n,-\frac3n\right)$ & yes & $\frac{9}{n}$ \\[1ex] 
 \hline
 $\left(-\frac13,-\frac13,\frac{n-10}{3}\right)$ & $\text{yes}\iff n=10$ & $1$ if $n=10$ \\ [1ex] 
 \hline
 $\left(-\frac{n-7}{n-3},-\frac{2}{n-3},-\frac{2}{n-3}\right)$ & yes & $\frac{(n-7)^2+4(n-1)}{(n-3)^2}$ \\[1ex] 
 \hline
 $\left(-\frac{n-8}{n-4},-\frac{2}{n-4},0\right)$ & yes & $\frac{(n-8)^2+4(n-2)}{(n-4)^2}$\\
 [1ex] 
 \hline
\end{tabular}
\end{center}

The points $(-3,0,0)$, $\left(-\frac13,-\frac13,-\frac13\right)$ do not satisfy the first and the last inequalities, respectively. The points $\left(-\frac13,-\frac13,0\right)$ and $\left(-\frac13,-\frac13,\frac{n-10}{3}\right)$ are vertices if and only if $n=10$, and the value of $f$ is then calculated taking $n=10$. One can check that for $n\ge10$, all these values are at most $1$, and whenever $x_n<0$, this value is less than $1$, which ends the proof.
\end{proof}

We can check that the vertices of the polytope $\mathcal{P}^-_n$ are the following $9n-71$ points:

\begin{align*}
    (1:0:\dots:0),&\\
    (1:-1:0:\dots:0),&\\
    (2:-1:-1:0:\dots:0),&\\
    (3:\underbrace{-1:\dots:-1}_{k \text{ coordinates}}:0:\dots:0),&\ 3\le k\le9 \\
    (m:\underbrace{-3:\dots:-3}_{m \text{ coordinates}}:0\dots:0),&\ 10\le m\le n\\
    (b-2:-(b-6):\underbrace{-2:\dots:-2}_{b \text{ coordinates}}:0:\dots:0),&\ 9\le b\le n-1\\
    (2b-2:-(b-3):-(b-3):\underbrace{-4:\dots:-4}_{b \text{ coordinates}}:0:\dots:0),&\ 8\le b\le n-2\\
    (3b:\underbrace{-b:\dots:-b}_{a \text{ coordinates}}:\underbrace{-(9-a):\dots:-(9-a)}_{b \text{ coordinates}}:0:\dots:0),&\ 3\le a\le8,\ 10\le a+b\le n.
\end{align*}

Finally, we conclude with the proof of Theorem \ref{main theo n>9}, rephrased below:

\begin{theorem} For $n\ge10$, the cone $\Nef^\text{e}(S_n)_{K_{S_n}\le0}$ admits a rational polyhedral fundamental cone with respect to the Cremona action of $W_n$. Namely, the cone $\mathcal{C}_n$ in $\R^{1,n}$ given by the inequalities $x_0\ge -x_1-x_2-x_3$, $x_1\le x_2\le \dots\le x_n\le0$ and $3x_0\ge -\sum_{i=1}^n x_i$.
\end{theorem}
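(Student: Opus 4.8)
The plan is to follow the same route as the proof of Theorem \ref{main theo n=9 rephrased}, splitting $\Nef^\text{e}(S_n)_{K_{S_n}\le0}$ into its timelike part (inside $\L_n^\circ$) and its lightlike part (on $\partial\L_n$). For the timelike part I would invoke the two facts already in hand: the polytope $\mathcal{P}^-_n=\mathcal{C}_n\cap\H^n$ is a fundamental domain of $\Nef^\text{e}(S_n)_{K_{S_n}\le0}\cap\H^n=\beta_n\cap\left(-\sqrt{\tfrac{2}{n-9}}K_S\right)^{\ge0}$ with respect to $W$ (by the Lemma \ref{lema}-type argument recorded just before Proposition \ref{caso grande}), and Proposition \ref{caso grande}, which says $\mathcal{P}^-_n$ has finite volume. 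Passing from the hyperbolic slice to the cone via $\H^n\cong\P(\L_n^\circ)$ and the linearity of the $W$-action, this gives that $\mathcal{C}_n\cap\L_n^\circ$ is a fundamental domain for $\Nef^\text{e}(S_n)_{K_{S_n}\le0}\cap\L_n^\circ$, and, using the stated equivalence that $\mathcal{C}$ is polyhedral if and only if $\mathcal{P}$ has finite volume together with the fact that the ideal vertices exhibited in Proposition \ref{caso grande} are integral, that $\mathcal{C}_n$ is rational polyhedral. The interior-disjointness half of the fundamental-domain condition, $\mathcal{C}_n^{\circ}\cap(w\mathcal{C}_n)^{\circ}=\emptyset$ for $w\neq e$, descends from the same property of $\Tilde{\mathcal{P}}_n$, since $\mathcal{C}_n$ is contained in the cone over $\Tilde{\mathcal{P}}_n$.

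It then remains to treat the lightlike boundary, i.e. to check that $\bigcup_{w\in W}w\mathcal{C}_n=\Nef^\text{e}(S_n)_{K_{S_n}\le0}$ once the boundary rays are included. For the inclusion $\subseteq$, I would verify that the two ideal vertices produced by Proposition \ref{caso grande}, namely $r_1=(1,-1,0,\dots,0)=e_0-e_1$ and $r_2=(3,-1,\dots,-1,0,\dots,0)=3e_0-e_1-\cdots-e_9$, lie in $\Nef^\text{e}(S_n)_{K_{S_n}\le0}$: both are classes of effective curves (a line through $P_1$, and a cubic through $P_1,\dots,P_9$), both are nef, and a direct computation gives $r_1\cdot K_S=-2\le0$ and $r_2\cdot K_S=0\le0$. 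Every extremal ray of $\mathcal{C}_n$ is either one of these two lightlike rays or a timelike vertex, and the timelike vertices already lie in $\Nef^\text{e}(S_n)_{K_{S_n}\le0}$ because $\mathcal{C}_n\cap\L_n^\circ\subseteq\Nef^\text{e}(S_n)_{K_{S_n}\le0}$ by the identity above. Since $\Nef^\text{e}(S_n)_{K_{S_n}\le0}$ is convex, it contains the convex cone $\mathcal{C}_n$ generated by these rays; and because $K_S$ is fixed by $W$ and $\Nef^\text{e}(S_n)$ is $W$-invariant, the whole region is $W$-invariant, so $\bigcup_w w\mathcal{C}_n\subseteq\Nef^\text{e}(S_n)_{K_{S_n}\le0}$.

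For the reverse inclusion on the boundary, which I expect to be the main obstacle, take any $v\in\Nef^\text{e}(S_n)_{K_{S_n}\le0}$ with $v^2=0$. By the remark (following Definition \ref{nef efetivo}) that $\Nef(S)\setminus\Nef^\text{e}(S)$ consists of the irrational rays of $\Nef(S)\cap\partial\L$, effectivity forces $v$ to be rational, hence after scaling an integral class $(d,-m_1,\dots,-m_n)$. I would then run the classical $W$-reduction on this vector: the transpositions $w_1,\dots,w_{n-1}$ sort the entries so that $x_1\le\cdots\le x_n$, and $w_0=\varphi_{123}$ sends $x_0$ to $2x_0+x_1+x_2+x_3$, strictly decreasing the degree $d=v\cdot e_0$ as long as $x_0+x_1+x_2+x_3<0$. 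Since $d\ge0$ cannot decrease indefinitely, this process terminates at a $W$-translate of $v$ lying in the closed fundamental chamber $\Tilde{\mathcal{P}}_n$, i.e. satisfying $x_0\ge-x_1-x_2-x_3$ and $x_1\le\cdots\le x_n$. For this reduced representative, nefness gives $v\cdot e_n=-x_n\ge0$, so $x_n\le0$, and the hypothesis $v\cdot K_S\le0$ is exactly the third inequality $3x_0\ge-\sum_i x_i$; hence the reduced vector satisfies all defining inequalities of $\mathcal{C}_n$ and lies in $\mathcal{C}_n$. Thus $v\in\bigcup_w w\mathcal{C}_n$, and Proposition \ref{caso grande} identifies the effective boundary classes as precisely $Wr_1\cup Wr_2$. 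The delicate point is the termination of this reduction for a lightlike class and the verification that the orbit actually meets $\overline{\Tilde{\mathcal{P}}_n}$; I would secure it by the standard degree-reduction argument for the Weyl group of the blown-up lattice, which is purely combinatorial on the integral vector and therefore avoids any subtlety about the infinite union $\bigcup_w w\mathcal{C}_n$ failing to be closed.
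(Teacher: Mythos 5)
Your proposal is correct and follows essentially the same route as the paper's own proof: the timelike part is handled by the fundamental-domain statement for $\mathcal{P}^-_n$ together with Proposition \ref{caso grande}, and the lightlike boundary by passing to an integral representative, sorting the coordinates, and applying the $w_0$-reduction that strictly decreases the nonnegative integer $x_0$ until the vector lands in $\mathcal{C}_n$, where Proposition \ref{caso grande} forces it onto one of the two rays. The only difference is presentational: you spell out the steps the paper compresses into \textquote{as in the proof of Theorem \ref{main theo n=9 rephrased}}, and the termination worry you flag at the end is already resolved by your own argument, since nefness keeps $x_0$ a nonnegative integer at every stage.
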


\begin{proof} As in the proof of Theorem \ref{main theo n=9 rephrased}, we need to check that the union of the orbits of the rays spanned  by $(1,-1,0,\dots,0)$, $(3,\underbrace{-1,\dots,-1}_{9\text{ coordinates }-1},0,\dots,0)$ $\in\R^{1,n}$ coincide with $\Nef^\text{e}(S)_{K_S\le0}\cap\partial\L_n$. It is clear that these rays are contained in $\Nef^\text{e}(S)_{K_S\le0}\cap\partial\L_n$, and it remains to check the opposite inclusion.

Let $v=(x_0,x_1,\dots,x_n)\in\R^{1,n}$ be a point in a rational ray of $\Nef(S)\cap\partial\L_n$ such that $v\cdot K_S\le 0$. In particular, $x_0\ge0$. %We will prove that if $v\cdot K_S<0$, then the ray spanned  by $v$ is on the $W$-orbit of the ray spanned  by $(1,-1,0,\dots,0)$, and otherwise, it is on the $W$-orbit of the ray spanned  by $(3,\underbrace{-1,\dots,-1}_{9\text{ coordinates }-1},0,\dots,0)$. 
We may assume $v$ is %the point on its ray that is closest to the origin 
on the lattice, that is, $x_0,x_1,\dots,x_n\in\Z$%, and there is no $0<\lambda<1$ such that $\lambda v$ is on the lattice
. We may assume also, that $x_1\le x_2\le\dots\le x_n$, otherwise we replace $v$ with some other point of the same $W$-orbit satisfying this condition. Moreover, $x_n\le0$ and $3x_0\ge-\sum_{i=1}^n x_i$.

If $x_0+x_1+x_2+x_3\ge0$, $v$ lies on cone $\mathcal{C}_n$, then by Proposition \ref{caso grande}, it lies on one of the two desired rays. Otherwise, $x_0+x_1+x_2+x_3<0$ and we apply the element $w_0=\varphi_{123}\in W$ of Definition \ref{cremona}, to replace $v$ with $v'=(2x_0+x_1+x_2+x_3,-x_0-x_2-x_3,-x_0-x_1-x_3,-x_0-x_1-x_2,x_4,\dots,x_n)$, and now the new first coordinate $x_0'<x_0$ is strictly smaller than before. We rearrange the new coordinates in order to have $x_1'\le x_2'\le\dots\le x_n'$ and we can repeat this process finitely many times, since $x_0$ is a positive integer. It only stops when $v\in\mathcal{C}_n$ and we are done.
\end{proof}

%This polyhedral fundamental cone is rational, since the finitely many facets are described with integer coefficients, so any ray can be spanned  by an integral vector of $\R^{1,n}$.

\begin{prop}
    The polytope $\mathcal{P}^-$ is a Coxeter polytope if and only if $n=10,11,13$.
\end{prop}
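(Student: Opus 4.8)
The plan is to reduce the Coxeter condition to a single angle and then settle it by an elementary arithmetic computation. Recall from the proof of Theorem \ref{o dominio fundamental} that $\mathcal{P}_n$ is cut out by the halfspaces with normal vectors $v_0=e_0-e_1-e_2-e_3$, $v_i=e_i-e_{i+1}$ for $i=1,\dots,n-1$, and $v_n=\sqrt2\,e_n$, and that $\mathcal{P}^-_n$ is obtained by adjoining the one extra halfspace $(-K_S)^{\ge0}$. Normalizing its normal so that the square is $-2$ as in Definition \ref{ort}, this facet has normal vector $v_{n+1}=\sqrt{2/(n-9)}\,(-K_S)$, exactly as in the definition of $\mathcal{P}^-_n$. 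So for $n\ge10$ the polytope is described by $n+2$ halfspaces, and I must compute the $(n+2)\times(n+2)$ Cartan matrix and check Definition \ref{cartan} against the definition of a Coxeter polytope.

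First I would observe that the block indexed by $v_0,\dots,v_n$ is independent of $n$: it is precisely the Cartan matrix recorded for $n=9$ (a chain of $\pi/3$ angles with the branch $v_0$ attached at $v_3$, a $\pi/4$ angle between $v_{n-1}$ and $v_n$, and all other pairs orthogonal). All these angles are submultiples of $\pi$, so this block never obstructs the Coxeter property. The whole question therefore concentrates on the last row, i.e.\ on the products $v_{n+1}\cdot v_j$.

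Next I would compute these products. Since $-K_S$ is orthogonal to $v_0$ and to each $v_i=e_i-e_{i+1}$ (one checks $(-K_S)\cdot v_0=3-3=0$ and $(-K_S)\cdot v_i=0$), the only nonzero off-diagonal entry created by the new vector is
\[
v_{n+1}\cdot v_n=\sqrt{\tfrac{2}{n-9}}\,\bigl((-K_S)\cdot\sqrt2\,e_n\bigr)=\frac{2}{\sqrt{n-9}}.
\]
Hence every angle involving $v_{n+1}$ is $\pi/2$ except the angle $\theta$ between $v_{n+1}$ and $v_n$, which satisfies $\cos\theta=1/\sqrt{n-9}$. Because $0<2/\sqrt{n-9}\le2$ for $n\ge10$, the hyperplanes are never divergent, so $\mathcal{P}^-_n$ is Coxeter if and only if $\theta$ equals $0$ or a submultiple $\pi/m$ of $\pi$.

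The conclusion is then purely arithmetic: as $n$ runs over $\{10,11,12,\dots\}$ the quantity $\cos\theta=1/\sqrt{n-9}$ runs over the strictly decreasing sequence $1,\tfrac1{\sqrt2},\tfrac1{\sqrt3},\tfrac12,\tfrac1{\sqrt5},\dots$, and I would match these against the cosines $\cos(0)=1$, $\cos(\pi/4)=\tfrac1{\sqrt2}$, $\cos(\pi/3)=\tfrac12$ of admissible angles. The values $n=10,11,13$ give $\theta=0,\pi/4,\pi/3$ respectively, all allowed. For every other $n\ge10$ the value $1/\sqrt{n-9}$ lies strictly between two consecutive cosines of the form $\cos(\pi/m)$ — for $n=12$ between $\cos(\pi/4)$ and $\cos(\pi/3)$, and for $n\ge14$ in the open interval $(0,\tfrac12)=(\cos\tfrac\pi2,\cos\tfrac\pi3)$ — so $\theta$ is not a submultiple of $\pi$ and $\mathcal{P}^-_n$ fails to be Coxeter. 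I expect this last, elementary step to be the only delicate point: one must use the monotonicity of $\cos$ together with the fact that $n-9$ is forced to be a positive integer to rule out any accidental coincidence $1/\sqrt{n-9}=\cos(\pi/m)$ with $m\ge5$, where $\cos(\pi/m)\in(\tfrac1{\sqrt2},1)$ would demand the impossible $n-9\in(1,2)$.
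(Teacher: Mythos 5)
Your proposal is correct and takes essentially the same route as the paper: compute the Cartan matrix of $\mathcal{P}^-_n$, observe that the only new angle is $\theta_{n,n+1}$ between $v_n=\sqrt2\,e_n$ and $v_{n+1}=-\sqrt{2/(n-9)}\,K_S$ with $\cos\theta_{n,n+1}=1/\sqrt{n-9}$ (all other products involving $v_{n+1}$ vanish), and then check arithmetically that this is $0$ or a submultiple of $\pi$ exactly for $n=10,11,13$. Your final exclusion step, using monotonicity of the cosine together with the integrality of $n-9$ to rule out accidental coincidences with $\cos(\pi/m)$, $m\ge5$, is in fact slightly more explicit than the paper's one-line justification of the same point.
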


\begin{proof}
    Let's calculate the Cartan Matrix of $\mathcal{P}^-$ taking into consideration its normal vectors. As in the case of $C_{\mathcal{P}_9}$, we have the normal vectors $v_0=e_0-e_1-e_2-e_3$, $v_i=e_{i}-e_{i+1}$ for $i=1,\dots,n-1$, $v_n=\sqrt{2}e_n$, and finaly, the new normal vector $v_{n+1}=-\sqrt{\frac{2}{n-9}}K_S$. The Cartan Matrix is therefore 

\begin{equation*}
C_{\mathcal{P}^-}=
\begin{pmatrix}
2 & 0 & 0 & -1 & 0 &  \dots & 0 & 0 & 0\\
0 & 2 & -1 & 0 & 0 &  \dots & 0 & 0 & 0\\
0 & -1 & 2 & -1 & 0 &  \ddots & \vdots & \vdots & \vdots\\
-1 & 0 & -1 & 2 & -1 &  \ddots & 0 & 0 & 0\\
0 & 0 & 0 & -1 & 2 &  \ddots & 0 & 0 & 0\\
 \vdots & \vdots  &  \ddots & \ddots  & \ddots  & \ddots & -1 & 0 & 0 \\
0 & 0 & \dots & 0 & 0 & -1 & 2 & -\sqrt2  & 0\\
0 & 0 & \dots & 0  & 0 & 0  & -\sqrt2 & 2  & \frac{-2}{\sqrt{n-9}}\\
0 & 0 & \dots & 0  & 0 & 0  & 0 & \frac{-2}{\sqrt{n-9}}  & 2\\
\end{pmatrix}.
\end{equation*}

The new entries $ \frac{-2}{\sqrt{n-9}}$ indicate that $\theta_{n,n+1}=\cos^{-1}\left(\frac{1}{\sqrt{n-9}}\right)$, which is a submultiple of $\pi$ if and only if $n=10,11,13$. In these cases, we have $\theta_{n,n+1}=0,\frac{\pi}{4},\frac{\pi}{3}$, respectively, and the Coxeter diagrams are respectively 

\begin{center}
\begin{tikzpicture}
%\usetikzlibrary{quotes}
  [scale=.8,auto=left,every node/.style={circle,draw=black,fill=none}]
  
  \node (n1) at (1,10) {};
  \node (n2) at (2,10) {};
  \node (n3) at (3,10) {};
  \node (n4) at (4,10) {};
  \node (n5) at (5,10) {};
  \node (n6) at (6,10) {};
  \node (n7) at (7,10) {};
  \node (n8) at (8,10) {};
  \node (n9) at (9,10) {};
    \node (n10) at (10,10) {};
  \node (n11) at (3,11) {};  % Novo vértice

  \foreach \from/\to in {n1/n2,n2/n3,n3/n4,n4/n5,n5/n6,n6/n7,n7/n8,n8/n9,n3/n11}
    \draw (\from) -- (\to);
\draw[dashed](n9) -- (n10);
\end{tikzpicture}
\end{center}

\begin{center}
\begin{tikzpicture}
%\usetikzlibrary{quotes}
  [scale=.8,auto=left,every node/.style={circle,draw=black,fill=none}]
  
  \node (n1) at (1,10) {};
  \node (n2) at (2,10) {};
  \node (n3) at (3,10) {};
  \node (n4) at (4,10) {};
  \node (n5) at (5,10) {};
  \node (n6) at (6,10) {};
  \node (n7) at (7,10) {};
  \node (n8) at (8,10) {};
  \node (n9) at (9,10) {};
    \node (n10) at (10,10) {};
    \node (n11) at (11,10) {};
  \node (n12) at (3,11) {};  % Novo vértice

  \foreach \from/\to in {n1/n2,n2/n3,n3/n4,n4/n5,n5/n6,n6/n7,n7/n8,n8/n9,n9/n10,n3/n12}
    \draw (\from) -- (\to);
\draw
([yshift=-3pt]n10.east) -- ([yshift=-3pt]n11.west);
\draw
([yshift=0pt]n10.east) -- ([yshift=0pt]n11.west);
  \draw ([yshift=3pt]n10.east) -- ([yshift=3pt]n11.west);
\end{tikzpicture}
\end{center}

\begin{center}
\begin{tikzpicture}
%\usetikzlibrary{quotes}
  [scale=.8,auto=left,every node/.style={circle,draw=black,fill=none}]
  
  \node (n1) at (1,10) {};
  \node (n2) at (2,10) {};
  \node (n3) at (3,10) {};
  \node (n4) at (4,10) {};
  \node (n5) at (5,10) {};
  \node (n6) at (6,10) {};
  \node (n7) at (7,10) {};
  \node (n8) at (8,10) {};
  \node (n9) at (9,10) {};
    \node (n10) at (10,10) {};
    \node (n11) at (11,10) {};
    \node (n12) at (12,10) {};
    \node (n13) at (13,10) {};
  \node (n14) at (3,11) {};  % Novo vértice

  \foreach \from/\to in {n1/n2,n2/n3,n3/n4,n4/n5,n5/n6,n6/n7,n7/n8,n8/n9,n9/n10,n10/n11,n11/n12,n3/n14}
    \draw (\from) -- (\to);
\draw
([yshift=-2pt]n12.east) -- ([yshift=-2pt]n13.west);
  \draw ([yshift=2pt]n12.east) -- ([yshift=2pt]n13.west);
\end{tikzpicture}.
\end{center}

Since the value $\frac{1}{\sqrt{n-9}}$ decreases to $0$ with $n$, it is not possible to find another submultiple of $\pi$, since it should be between $\frac{\pi}{3}$ and $\frac{\pi}{2}$. 
\end{proof}

\paragraph{\textbf{Acknowledgements}} I would like to thank Carolina Araujo for the problem that originated this paper and all her support throughout this work. I would also like to thank Mikhail Belolipetsky for the insightful discussions about reflection groups. Furthermore, I would like to thank CAPES (Coordernação de Aperfeiçoamento de Pessoal de Nível Superior), CNPq (Conselho Nacional de Desenvolvimento Científico e Tecnológico) and FAPERJ (Fundação Carlos Chagas Filho de Amparo à Pesquisa do Estado do Rio de Janeiro) for the financial support. 

 \bibliographystyle{alphaurl}
\bibliography{main}
\end{document}